\newcommand{\N}{\mathbb{N}}
\newcommand{\Z}{\mathbb{Z}}
\newcommand{\C}{\mathbb{C}}
\newcommand{\R}{\mathbb{R}}
\newcommand{\Q}{\mathbb{Q}}
\newcommand{\ee}{\mathrm{e}}
\newcommand{\va}{\varphi}
\DeclareMathOperator{\tr}{tr}
\DeclareMathOperator{\Fix}{Fix}
\newcommand{\dpt}{\displaystyle}
\newtheorem{thm}{Theorem}[section]
\newtheorem{prop}[thm]{Proposition}
\newtheorem{cor}[thm]{Corollary}
\newtheorem{lemma}[thm]{Lemma}
\newtheorem{defn}[thm]{Definition}
\begin{document}
%\setlength{\baselineskip}{18pt} %espaçamento entre as linhas

%\onehalfspacing

\title[Recognition of symmetries in reversible maps]{Recognition of symmetries in reversible maps}
%\author[Recognition of symmetries in reversible maps\qquad\today]{ Patr\'icia H. Baptistelli, Isabel S. Labouriau \and Miriam Manoel}
\author[P. H. Baptistelli, I. S. Labouriau \and M. Manoel]{ Patr\'icia H. Baptistelli, Isabel S. Labouriau \and Miriam Manoel}

\maketitle
\bigskip

%\centerline{\scshape Patr\'icia H. Baptistelli}

\centerline{\scshape Patr\'icia H. Baptistelli\footnote{Email address: phbaptistelli@uem.br}}
\medskip

{\footnotesize \centerline{Departamento de Matem\'atica, CCE} \centerline{Universidade Estadual de
Maring\'{a}} \centerline{Av. Colombo, 5790, 87020-900}
\centerline{ Maring\'{a}, Brazil }}
\medskip

\centerline{\scshape Isabel S. Labouriau\footnote{Email address:
islabour@fc.up.pt}}
\medskip

{\footnotesize \centerline{Centro de Matem\'atica}
\centerline{Universidade do Porto} \centerline{Rua do Campo Alegre, 687, 4169-007} \centerline{Porto, Portugal}}
\medskip

\centerline{\scshape Miriam Manoel\footnote{Email address:
miriam@icmc.usp.br}}
\medskip

{\footnotesize \centerline{Departamento de Matem\'atica, ICMC}
\centerline{Universidade de S\~ao Paulo} \centerline{13560-970 Caixa
Postal 668,} \centerline{S\~ao Carlos, Brazil }}

\quad

\begin{abstract}
We deal with  germs of diffeomorphisms that are reversible  under an involution. 
We establish that this condition implies that, in general,
 both the  family of  reversing symmetries and the group of symmetries are not finite, in contrast with  continuous-time dynamics, where typically there are finitely many  reversing symmetries. From this we obtain two chains of fixed-points subspaces of involutory reversing symmetries that we use  to obtain geometric information on the discrete dynamics generated by a given   diffeomorphism. The results are 
illustrated by the generic case in arbitrary dimension, when the diffeomorphism is the composition of transversal linear involutions. 
\end{abstract}

{2010 MSC: 58J70, 37C80, 93C55} 

\quad

{Keywords:} reversible map, involutions, symmetry, fixed-point subspace
%\subjclass[2010]{primary 13A50; secondary 34C14}% Subject code(s)

%\maketitle

\section{Introduction} \label{sec:introduction}

Symmetry occurs in many different contexts. It has driven attention  in many fields of  Mathematics and related areas whenever existence and analysis of patterns become relevant. 
Symmetric objects have characteristic features that are not present in generic objects.
Symmetry carries geometric information that facilitates the study of such objects.
One particularly important  kind of symmetry  -- or reversing symmetry, as we shall say -- is given by an involution. For a local study in $\R^n$, this is defined as the germ of a diffeomorphism  $\va: (\R^n,0) \to (\R^n,0)$ satisfying $\varphi \circ \varphi = I_n ,$ namely a germ of diffeomorphism which is its own inverse. We point out that many results in the present work are algebraic, so it shall be clear that those also hold for general invertible maps.
For the same reason, although formulated for diffeomorphisms, our results often do not require differentiability and hold for homeomorphisms.

One branch of applications of reversing symmetries comes from
dynamical systems.
We refer to \cite{LR} for references on reversibility and related problems. 
For discrete dynamics governed by
the iteration of
 a (germ of) diffeomorphism $F: (\R^n,0) \to (\R^n,0)$, we recall that this is called reversible under a (germ of) diffeomorphism  $R: (\R^n,0) \to (\R^n,0)$, or simply $R$-reversible, if $F \circ R = R \circ F^{-1}$. 
In other words, the symmetric copy of a trajectory is also a trajectory with time reversed.
If $F$ happens to be reversible under an involution $\va_1$, it follows that it can be decomposed as  $F=\va_1\circ\va_2$, for some involution $\va_2$. Yet,  $F$ is also $\va_2$-reversible. 
As a consequence, contrary to the continuous case, reversible discrete dynamical systems always have more than one reversing involution. 
%in discrete reversible dynamics there is always more than one reversing involution.
%in discrete dynamics reversing involutions can never appear singly. 
We remark that there exist reversing symmetries  that are not involutions, even for linear isomorphisms; for example, $2\times 2$
 symmetric matrices 
% on the plane 
 are reversible under the rotation of $\pi/2$. Here we assume throughout that $F$ has an involutory reversibility. In this case, interesting dynamics resides in the class governed by diffeomorphisms that possess an infinite number of involutions. In algebraic terms, if $\va_1$ and $\va_2$  generate an Abelian group, then these are simultaneously linearizable, as a consequence of the Bochner-Montgomery theorem about linearization of a compact group of transformations around a fixed point (see \cite{MZ}). In our setting, the two involutions generate a discrete group which may be non-Abelian and is generally noncompact. Then, a natural question raised in this set-up regards the local linearization around a fixed point. The simultaneous linearization and transversality of $\va_1$ and $\va_2$ is a sufficient condition for the linearization of $F$ (see \cite{MMT} for more on this topic). Also, pairs of involutions are linearizable provided $F$ is a hyperbolic germ of diffeomorphism \cite{Teixeira2}. The work in \cite{MMT2} extends this result to $F$  normally hyperbolic. 
In the first case, the fixed-point set of $F$ reduces to a point; in the second case, it can be a local submanifold with positive dimension. 
Normally hyperbolic examples are treated in \cite{BLM3}.

In the present work, simultaneously linearizable involutions are treated as a particular case of our  results.
%In the present work the first case falls as applications of our results. The second case is the subject that  appears in \cite{BLM3}. 
Sets of linear involutions are treated in Section~\ref{subsecN2} for the planar case and in Section~\ref{subsecN3} for higher dimensions. 

Another branch of applications comes from Singularity theory in the study of divergent diagrams of folds (see, for example, \cite{MMT, Teixeira1, Voronin}).
We recall that  given a fold $f: (\R^n,0) \to (\R^n,0)$, there exists a unique nontrivial involution $\va$ associated with $f$, that is, such that $f \circ \va = f$.  
In \cite{MMT} the authors prove that equivalent classes of $s$-tuples of divergent diagram of folds, for $s \geq 1$ finite, are described through the simultaneous equivalence of the associated $s$-tuples of involutions. Here we remark that the same holds for an infinite sequence of folds. In addition, it follows that the study of  singular sets of folds is deduced directly from the description  of fixed-point subspaces of involutions we present here, once the singular set of  $f$ is precisely the fixed-point submanifold of the associated involution $\va$. Further investigation on this topic
is carried out in \cite{BLM2}.
 
Here is what we shall encounter in the next sections. In Section~\ref{secReversible} 
we recall some basic definitions and properties; we also present the main
general 
 results regarding symmetries and reversing symmetries of a reversible germ of diffeomorphism $F$. 
 We establish in Theorem~\ref{thm:chains} the existence of a chain of fixed-point subspaces of the (infinite) sequence of reversing involutions which must be tracked by the iterates of $F$. 
 In the particular case when the fixed-point submanifolds of $\va_1$ and $\va_2$ have codimension 1, Theorem~\ref{thm:FixPhin2} describes the orbits of points in the complement in $\R^n$ of these  fixed-point submanifolds.
 The connected components of this complement are interchanged by $F$.
 % with respect to the  connected components of $\R^n$ whose boundaries contain these fixed-point submanifolds. 
 We also use the chain of fixed-point subspaces to find periodic points. In Section~\ref{subsecN2} the results of the Section~\ref{secReversible} are applied to the class of pairs of  transversal linear involutions on the plane. 
 This is done 
using the normal forms  obtained in \cite{MMT} under the equivalence given by simultaneous conjugacy.
%up to equivalence given by simultaneous conjugacy, whose normal forms has been obtained in \cite{MMT}.  
We also relate the reversible dynamics to 
% with 
 the geometry of the chain of fixed-point subspaces of the reversing involutions.  In Section~\ref{subsecN3} we use the results of  Section~\ref{subsecN2}  
 %previous section  to get the corresponding results for 
 to extend the analysis to dimension greater or equal to three. Here we use again normal forms of transversal linear involutions given in \cite{MMT} which, for almost all cases, are  suspensions of the normal forms on the plane. 

In short,  Section~\ref{secReversible} contains general results on reversible  diffeomorphisms.
These results are illustrated by detailed descriptions of the geometry in two dimensions (Section~\ref{subsecN2}) and in higher dimensions (Section~\ref{subsecN3}).

\section{Reversibility and equivariance} \label{secReversible}

The  local study developed in this paper is assumed to be about the origin $0 \in \R^n$.  Hence, we shall work with the notion of a germ: Let $F: U_{1} \subseteq \R^n \to \R^n$ be a mapping defined on a neighborhood $U_{1}$ of $0$ such that $F(0) = 0$. The germ of $F$ at $0$, denoted by $F: (\R^n, 0) \rightarrow (\R^n, 0)$, is the set of mappings $G:U_{2} \subseteq \R^n \to \R^n$ such that there is a neighborhood $U \subseteq U_{1} \cap U_{2}$ of $0$ with $F|_{U} = G|_{U}$. When $F$ is a diffeomorphism, we say that $F: (\R^n, 0) \rightarrow (\R^n, 0)$ is a germ of a diffeomorphism.

Let $\Omega$ be the group under composition of the invertible maps on $(\R^n,0)$ 
acting on $(\R^n,0)$ by the standard action of application,
\[  \Omega \times (\R^n,0) \to (\R^n,0),   \ \ \ (\varphi, x)  \mapsto \varphi x =  \va (x).  \]

\begin{defn} 
Let $\varphi \in \Omega.$ A germ of  a diffeomorphism  $F: (\R^n,0) \to (\R^n,0)$ is {\em $\varphi$-equivariant} if $F \circ \varphi = \varphi \circ F$. It is {\em $\varphi$-reversible} if $F \circ \varphi = \varphi \circ F^{-1}.$  In the first case, $\va$ is a {\em symmetry} of $F$ and in the second case $\va$ is a {\em reversing symmetry} of $F$.
\end{defn}

Note that if $F$ is $\va$-reversible then so is $F^{-1}.$ For a given germ of diffeomorphism $F$ of $(\R^n,0) $ we denote by $\Gamma_{+}$ the group formed by the symmetries of $F$ and by $\Gamma_{-}$ the set of reversing symmetries of $F,$ that is,
\begin{equation}
\label{group}
\Gamma_{+} = \{\va \in \Omega: F \circ \va = \va \circ F\} \quad \text{and} \quad \Gamma_{-} = \{\va \in \Omega : F \circ \va = \va \circ F^{-1}\}.
\end{equation}

%
%For $\Gamma = \Gamma_{+} \overset{.}{\cup} \Gamma_{-},$ we shall say that $F$ is  $\Gamma_+$-equivariant and $F$ is $\Gamma$-reversible-equivariant. 

\noindent  In general, the set $\Gamma_{-}$ doesn't have a group structure. Indeed, $\Gamma_{+} \cap \Gamma_{-} \neq \emptyset$ if, and only if,  $\Gamma_{+} = \Gamma_{-},$  which is equivalent to $F^2 = I_n$, where $I_n$ denotes the germ of the identity map on $\R^n$. This is the only case for which $\Gamma_{-}$ is a group. For  $F^2 \neq I_n$, we have that $\Gamma_{-}$ is closed under inversion, but composition of two reversing symmetries belongs to $ \Gamma_{+}.$ Moreover, we can write $\Gamma_{-} = \delta \Gamma_{+}$ for any $\delta \in \Gamma_{-}$ fixed (and arbitrary). For $\va \in \Gamma_{+}$ we have 
 $$
 F \circ (\delta \circ \va)  = \delta \circ F^{-1} \circ \va  = (\delta \circ \va) \circ F^{-1},
 $$ 
hence
 $\delta \circ \va \in \Gamma_{-}.$ Conversely, if $\va \in \Gamma_{-}$ then $\va = \delta \circ (\delta^{-1} \circ \va) \in \delta \Gamma_{+}.$ 
 
\begin{defn}
An {\em involution} is a germ of a diffeomorphism $\va: (\R^n,0) \to (\R^n,0)$ satisfying $\varphi \circ \varphi = I_n.$
\end{defn}

From now, we assume that there exists an involution in $\Gamma_{-}.$  In \cite{MMT} it has been recognized that, in this case, reversible diffeomorphisms are in 1--1 correspondence with 
 pairs of involutions. In fact, if $\va_1$ is an involution then $F$ is $\varphi_1$-reversible if, and only if, $F = \varphi_1 \circ \varphi_2$ for 
the involution $\va_2=\va_1\circ F$.
We now also remark that, in this case, $F$ is $\varphi_2$-reversible too, for
$$
F \circ \va_2 = \va_1 = \va_1^{-1} = \va_2 \circ \va_2^{-1} \circ \va_1^{-1} = \va_2 \circ F^{-1}.
$$ 
Hence   $F$  corresponds to the
 pair of reversing symmetries $(\va_1, \va_2)$. 
In what follows we show that, more than one pair,  there are actually  two infinite sequences of involutions $\va_k$ and $\va_k'$
such that $F$ is $\va_k$- and $\va_k'$-reversible if $F^{m} \neq I_n, $ for all  $m \in {\Bbb N}$. 
We take 
\begin{equation} \label{eq:sequences}
\va_k=\va_2\circ F^{k-2}, \  \  \va_k'=F^{k-1}\circ\va_1, \ \ k\in\N, \ \  k\ge 1.
\end{equation}
This definition is consistent for $k = 1$ and $k = 2,$ with $\va_1' = \va_1.$

\begin{prop}
Let $\va_1: (\R^n,0) \to (\R^n,0)$ be an involution and let  $F: (\R^n,0) \to (\R^n,0)$ be  a $\va_1$-reversible germ of  diffeomorphism such that $F^{m} \neq I_n, $ for all $m \in  {\Bbb N}$. Then $F$ has an infinite group of symmetries and an infinite set of reversing symmetries.

\end{prop}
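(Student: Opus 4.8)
The plan is to produce an explicit infinite subgroup of $\Gamma_+$ and then transport it into $\Gamma_-$ using the coset description obtained just before the statement. First I would observe that $F$ is trivially a symmetry of itself, since $F\circ F=F\circ F$; as $F$ is a germ of diffeomorphism we have $F\in\Omega$, so the whole cyclic group $\langle F\rangle=\{F^k:k\in\Z\}$ is contained in the group $\Gamma_+$. The hypothesis $F^m\neq I_n$ for every $m\in\N$ says precisely that no positive power of $F$ is the identity; since $F$ is invertible this forces the powers $F^k$, $k\in\Z$, to be pairwise distinct (if $F^a=F^b$ with $a>b$, then $F^{a-b}=I_n$ with $a-b\in\N$, a contradiction). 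Hence $\langle F\rangle$ is infinite, and therefore so is $\Gamma_+$.

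For the reversing symmetries I would invoke the structural fact already established: fixing any $\delta\in\Gamma_-$ one has $\Gamma_-=\delta\,\Gamma_+$. Since $\va_1$ is an involution lying in $\Gamma_-$, the set $\Gamma_-$ is nonempty and we may take $\delta=\va_1$, so that $\Gamma_-=\va_1\,\Gamma_+$. Left composition by the fixed invertible germ $\va_1$ is a bijection of $\Omega$ onto itself, hence it maps the infinite set $\Gamma_+$ bijectively onto $\Gamma_-$; therefore $\Gamma_-$ is infinite as well.

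Alternatively, and more in the spirit of the sequences in \eqref{eq:sequences}, I could display the reversing symmetries $\va_k=\va_2\circ F^{k-2}$ directly: each lies in $\Gamma_-$ because $\va_2\in\Gamma_-$ and $F^{k-2}\in\Gamma_+$ (using $\Gamma_-\cdot\Gamma_+\subseteq\Gamma_-$, which follows from $\Gamma_-=\delta\,\Gamma_+$), and they are pairwise distinct since the powers of $F$ are. I expect the only point needing care—hardly a genuine obstacle—to be this passage from the order hypothesis $F^m\neq I_n$ to the actual distinctness of the elements, both for the powers $F^k$ and, on the second route, for the $\va_k$. Everything else is immediate from the group axioms for $\Gamma_+$ and the coset identity $\Gamma_-=\delta\,\Gamma_+$.
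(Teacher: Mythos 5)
Your proposal is correct and follows essentially the same route as the paper: the paper likewise notes that the cyclic group $[F]=\{F^k : k\in\Z\}$ lies in $\Gamma_+$ and is infinite by the hypothesis $F^m\neq I_n$, and then exhibits the $\va_k$, $\va_k'$ of \eqref{eq:sequences} as infinitely many distinct elements of $\Gamma_-$. Your primary phrasing via the bijection $\Gamma_-=\va_1\,\Gamma_+$ is only a cosmetic variant of this (and your alternative route is exactly the paper's), so no further comparison is needed.
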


\begin{proof}
$F$ is clearly a symmetry of $F$ itself, so the
subgroup  generated by $F$,  
$$
[ F ] = \{ F^{k} :  k \in {\Bbb Z}\},
$$ 
is formed by symmetries, and then $\Gamma_{+}$ is infinite.
Furthermore, all $\va_k$ and $\va_k'$ defined in (\ref{eq:sequences}) are 
different elements
in $ \Gamma_{-},$ so $\Gamma_{-}$ is infinite.
\end{proof}

Examples are given in  Sections~\ref{subsecN2} and \ref{subsecN3}.

\begin{defn}
Given a germ of a diffeomorphism $F: (\R^n,0) \to (\R^n,0)$, the {\em $F$-orbit} of a point $x\in (\R^n,0)$ is the ordered set $\{x_k =  F^k(x) : \ k\in\Z\}$. When $F$ is clear from the context we just call this set the {\em orbit} of $x$.
\end{defn}

If $F: (\R^n,0) \to (\R^n,0)$ is the germ of a $\va$-reversible diffeomorphism, then $\va$ maps the $F$-orbit of $x \in (\R^n,0)$ into the $F^{-1}$-orbit of $\va(x)$, preserving the order.

\subsubsection{The fixed-point sets}
\begin{defn}
The {\em fixed-point set} of  a map-germ $\va \in \Omega$  is $$\Fix(\va)=\left\{ x\in (\R^n,0):\ \va(x)=x \right\}$$ and the {\em fixed-point set} of a subgroup $\Sigma\le \Omega$ is $$\Fix(\Sigma)=\left\{ x\in (\R^n,0) :\ \gamma x=x, \ \forall  \ \gamma\in\Sigma \right\}.$$
\end{defn}

 If $\va$ is linear and $\Sigma$ is a subgroup of the linear group ${\rm GL}(n)$  then $\Fix(\va)$ and $\Fix(\Sigma)$ are naturally extended to the whole $\R^n$ as linear subspaces of $\R^n$. 
 It follows that  the fixed-point set of  any
involution $\va: (\R^n,0) \to (\R^n,0)$ is a smooth submanifold in 
 $(\R^n,0)$, since $\va$ is conjugate to the germ of its linear part $d \va(0)$ at the origin (see \cite[Lemma 2.2]{MMT2}). 
 Denote by $\langle a_1, \ldots, a_{\ell} \rangle$ the linear subspace generated by $a_1, \ldots, a_\ell$.  
 
The following result is classical and extensively used in equivariant 
conti\-nuous-time
dynamics:
 \begin{lemma}
Let $F: (\R^n,0) \to (\R^n,0)$ be a germ of  an equivariant diffeomorphism  with symmetry group $\Gamma_+$.
If  $\Sigma\le\Gamma_+$ is a subgroup, then $\Fix(\Sigma)$ is $F$-invariant.
\end{lemma}

\begin{proof}
A point $x \in (\R^n,0)$ belongs to $\Fix(\Sigma)$ if, and only if, $\gamma x=x$ for all $\gamma\in\Sigma$. Then if $F$ is equivariant, we have $\gamma F(x)=F(\gamma x)= F(x)$ for all $\gamma\in\Sigma$. Hence $F(x)\in \Fix(\Sigma)$.
\end{proof}

 Fixed-point sets of symmetries of $F$ are therefore invariant under the discrete dynamics ruled by $F$. There is no similar result for $\Gamma_{-}$. Firstly, subsets of $\Gamma_{-}$ do not have a group structure. In addition, if $\Sigma$ is a subset of $\Gamma_{-}$ and $x \in {\rm Fix}(\Sigma)$, then $\gamma F^{-1}(x)=F(\gamma x)= F(x)$ for all $\gamma\in\Sigma$. Concerning reversing symmetries, we have:

\begin{lemma}\label{lemma:FixPhin}
Let $\va_1: (\R^n,0) \to (\R^n,0)$ be an involution and let  $F: (\R^n,0) \to (\R^n,0)$ be a $\va_1$-reversible germ of diffeomorphism. 
Consider the two sequences of reversing symmetries of $F$ given in (\ref{eq:sequences}).  The following equalities hold:
$$ \label{eq:equalities}
F\left( \Fix(\va_{k+2})\right) = \Fix(\va_k),   \ \  F\left( \Fix(\va_{k}')\right) = \Fix(\va_{k+2}'),  \ \ k \in {\N},  \ \ k \geq 1. 
$$
\end{lemma}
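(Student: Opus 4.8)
The plan is to reduce both set equalities to purely algebraic identities in the group $\Omega$, using the fact that conjugation by a diffeomorphism transports fixed-point sets. First I would record the elementary observation that for any $\alpha, \psi \in \Omega$ one has $\alpha(\Fix(\psi)) = \Fix(\alpha \circ \psi \circ \alpha^{-1})$: indeed, a point $y$ lies in the left-hand side exactly when $\alpha^{-1}(y)$ is fixed by $\psi$, and $\psi(\alpha^{-1}(y)) = \alpha^{-1}(y)$ rearranges to $(\alpha \circ \psi \circ \alpha^{-1})(y) = y$. Applying this with $\alpha = F$ converts the two claimed equalities into the conjugation identities $F \circ \va_{k+2} \circ F^{-1} = \va_k$ and $F \circ \va_k' \circ F^{-1} = \va_{k+2}'$.

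Next I would verify these two identities directly. For the first, since $\va_{k+2} \in \Gamma_-$ we have $F \circ \va_{k+2} = \va_{k+2} \circ F^{-1}$, whence $F \circ \va_{k+2} \circ F^{-1} = \va_{k+2} \circ F^{-2}$; substituting $\va_{k+2} = \va_2 \circ F^{k}$ from (\ref{eq:sequences}) gives $\va_2 \circ F^{k-2} = \va_k$, as required. For the second, the reversing-symmetry relation for $\va_k'$ again yields $F \circ \va_k' \circ F^{-1} = \va_k' \circ F^{-2}$, and writing $\va_k' = F^{k-1} \circ \va_1$ I would transport the two factors of $F^{-1}$ across $\va_1$ one at a time by means of $\va_1 \circ F^{-1} = F \circ \va_1$ (the $\va_1$-reversibility of $F$ read right-to-left); this produces $F^{k-1} \circ \va_1 \circ F^{-2} = F^{k+1} \circ \va_1 = \va_{k+2}'$.

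A preliminary point I would settle first is that each $\va_k$ and $\va_k'$ genuinely lies in $\Gamma_-$, so that the relation $F \circ \va = \va \circ F^{-1}$ is available for every index; this is immediate from $\Gamma_- = \delta\Gamma_+$ together with $F^j \in \Gamma_+$ for all $j \in \Z$, both established just before the statement. I do not anticipate any real obstacle, as the argument is entirely formal. The only care required is bookkeeping with the composition order and consistent use of the reversibility relation in its two equivalent forms $F \circ \va = \va \circ F^{-1}$ and $\va \circ F = F^{-1} \circ \va$. If anything is delicate it is merely confirming that the indices remain in the admissible range $k \geq 1$ and that the edge cases $k = 1, 2$ (where $\va_1' = \va_1$) are consistent with (\ref{eq:sequences}), but these are covered by the remark following that definition.
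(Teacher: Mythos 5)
Your proof is correct. It rests on the same two identities that drive the paper's argument, but it packages them differently: the paper proves each set equality by a pointwise two-inclusion argument (take $x\in\Fix(\va_{k+2})$, set $y=F(x)$, use $F\circ\va_2=\va_2\circ F^{-1}$ to check $\va_k(y)=y$, then exhibit a preimage for the reverse inclusion), whereas you first isolate the general fact $\alpha(\Fix(\psi))=\Fix(\alpha\circ\psi\circ\alpha^{-1})$ and then verify the conjugation identities $F\circ\va_{k+2}\circ F^{-1}=\va_k$ and $F\circ\va_k'\circ F^{-1}=\va_{k+2}'$ purely algebraically. The underlying manipulation --- commuting $F$ past $\va_2$ or $\va_1$ via the reversibility relation --- is identical in both, but your formulation buys two things: the set \emph{equality} comes in one stroke, since conjugation by the invertible germ $F$ carries $\Fix(\psi)$ bijectively onto $\Fix(F\circ\psi\circ F^{-1})$, so there is no need to argue the two inclusions separately; and it makes the invariance under equivalence asserted in Proposition~\ref{prop:equivalence} transparent, because simultaneous conjugacy by $h$ interacts trivially with conjugation by $F$. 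Your preliminary check that every $\va_k$ and $\va_k'$ lies in $\Gamma_-$ is the right thing to settle first; the only quibble is that for $\va_k'=F^{k-1}\circ\va_1$ you are implicitly invoking the right-coset containment $\Gamma_+\,\delta\subseteq\Gamma_-$ rather than the left-coset identity $\Gamma_-=\delta\,\Gamma_+$ stated in the paper, but this is verified by the one-line computation $F\circ F^{k-1}\circ\va_1=F^{k-1}\circ\va_1\circ F^{-1}$ (or by rewriting $\va_k'=\va_1\circ F^{1-k}\in\va_1\Gamma_+$), so nothing is lost.
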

\begin{proof}
Consider $x \in \Fix(\va_{k+2})$, {\it i.e.},   $x= \va_2(F^{k}(x))$.
If $y=F(x)$, then
$$
y=F(\va_2(F^{k}(x)))=\va_2(F^{-1}(F^{k}(x)))=\va_2(F^{k-2}(F(x)))=
\va_k(y),
$$
so $F\left( \Fix(\va_{k+2})\right)\subseteq \Fix(\va_k)$.

For the other inclusion, let  $y \in \Fix(\va_k).$ 
Then $$y = \va_k(y) =   \va_2 \circ F^{k-2}(y) = F (\va_2 \circ F^{k-1}(y))$$ and, therefore, $y = F(x)$ for  $x=\va_2 \circ F^{k-1}(y)$. Also,  $\va_{k+2} (x) =\va_2\circ F^{k}(x)=\va_2\circ F^{k-1}(y)=x$.

The equalities for the other sequence are obtained analogously. 
\end{proof}

\begin{thm} \label{thm:chains}
 Applying $F$ to the fixed-point submanifolds of the involutions of (\ref{eq:sequences})  the following chains are obtained:

\[  \cdots \stackrel{}{\longrightarrow}  \Fix(\va_{2k}) 
\stackrel{}{\longrightarrow} \cdots   \stackrel{}{\longrightarrow}   \Fix(\va_{2})     \stackrel{}{\longrightarrow} \Fix(\va_{2}')  \stackrel{}{\longrightarrow}  
\cdots \stackrel{}{\longrightarrow}  \Fix(\va_{2k}') \stackrel{}{\longrightarrow}  \cdots, \]
\begin{equation}  \label{eq:chain}
\end{equation}
\[  \cdots \stackrel{}{\longrightarrow}  \Fix(\va_{2k+1}) 
\stackrel{}{\longrightarrow} \cdots   \stackrel{}{\longrightarrow}   \Fix(\va_{1})     \stackrel{}{\longrightarrow} \Fix(\va_{3}')  \stackrel{}{\longrightarrow}  
\cdots \stackrel{}{\longrightarrow}  \Fix(\va_{2k+1}') \stackrel{}{\longrightarrow}  \cdots, \]
for $k \geq 1.$ 
\end{thm}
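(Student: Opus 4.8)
The plan is to assemble both chains directly from the two equalities of Lemma~\ref{lemma:FixPhin}, which is the only real input. Recall that this lemma gives $F\left(\Fix(\va_{k+2})\right) = \Fix(\va_k)$ and $F\left(\Fix(\va_k')\right) = \Fix(\va_{k+2}')$ for $k\ge 1$. The first equality decreases the index by $2$ while preserving its parity, so iterating it produces, for each fixed parity, a chain of unprimed fixed-point sets along which $F$ descends to the smallest admissible index ($\va_2$ for even indices, $\va_1$ for odd). Dually, the second equality increases the primed index by $2$, again preserving parity, so iterating it produces ascending chains of primed fixed-point sets starting from the smallest index. Thus the two ``halves'' of each displayed chain are immediate; what remains is to glue each descending unprimed half to the corresponding ascending primed half at its low-index end.

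For the odd chain the gluing is free: since $\va_1' = F^{0}\circ\va_1 = \va_1$, the link $\Fix(\va_1)\to\Fix(\va_3')$ is precisely the second equality of Lemma~\ref{lemma:FixPhin} with $k=1$, namely $F\left(\Fix(\va_1')\right) = \Fix(\va_3')$. Hence the odd chain is nothing more than the concatenation of the odd-index instances of the two equalities, with the shared endpoint $\va_1=\va_1'$ serving as the hinge.

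The one genuinely new step, and the only place a short computation is needed, is the connecting link $\Fix(\va_2)\to\Fix(\va_2')$ of the even chain, since $\va_2=\va_1\circ F$ and $\va_2'=F\circ\va_1$ are distinct unless $F^2=I_n$. I would establish $F\left(\Fix(\va_2)\right)=\Fix(\va_2')$ directly: if $\va_2(x)=x$ then $\va_1(F(x))=x$, whence $\va_2'(F(x))=F(\va_1(F(x)))=F(x)$, giving one inclusion, and the reverse inclusion follows by the symmetric argument (given $y\in\Fix(\va_2')$, set $x=F^{-1}(y)$ and check $\va_2(x)=x$). Alternatively, and perhaps more transparently, reversibility gives $F^{-1}\circ\va_1=\va_1\circ F=\va_2$, so that $\va_2$ is exactly the term obtained by evaluating the primed sequence at $k=0$; the connecting equality is then the $k=0$ case of the second equality of Lemma~\ref{lemma:FixPhin}, and one notes that the proof of that lemma never actually used $k\ge 1$.

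I do not expect a serious obstacle: the mathematical content is entirely carried by Lemma~\ref{lemma:FixPhin}, and the theorem reduces to tracking parities correctly and verifying that the unprimed and primed sequences meet at their low-index ends. The only point requiring genuine care is the even-chain hinge $\Fix(\va_2)\to\Fix(\va_2')$, which is not literally an instance of the lemma as stated for $k\ge 1$ and so must be justified separately by one of the two arguments above.
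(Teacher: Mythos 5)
Your proposal is correct and follows essentially the same route as the paper: the authors likewise derive the chains by iterating Lemma~\ref{lemma:FixPhin} and supplying the two connecting equalities $F(\Fix(\va_1))=\Fix(\va_3')$ and $F(\Fix(\va_2))=\Fix(\va_2')$ via ``similar calculations,'' which you have simply written out in full. Your observation that the odd-chain link is literally the $k=1$ instance of the lemma (since $\va_1'=\va_1$), and that only the even-chain hinge needs a separate check, is a accurate and slightly sharper accounting of what the paper leaves implicit.
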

\begin{proof}
Similar calculations  to those of
%as done in 
Lemma~\ref{lemma:FixPhin} give
 \[F(\Fix(\va_1) ) = \Fix(\va_3') \quad \mbox{and} \quad F(\Fix(\va_2) ) = \Fix(\va_2').  \]
We now use Lemma~\ref{lemma:FixPhin} to get the result.
\end{proof}

If for any $k\ge 1$ and $\ell\in\N$ we have either $\Fix(\va_k)=\Fix(\va_{k+2\ell})$ or  $\Fix(\va_k)=\Fix(\va'_{k+2\ell})$  or $\Fix(\va_k')=\Fix(\va_{k+2\ell}')$  then the whole chain  in \eqref{eq:chain}
containing one of these fixed-point manifolds is finite.

The following is also a direct consequence of Lemma~\ref{lemma:FixPhin}:

\begin{cor} \label{cor:dimension}
All fixed-point submanifolds of the involutions of (\ref{eq:sequences}) with odd index 
have dimension equal to $\dim \Fix(\va_1)$, and the ones for even index  have dimension equal to $\dim \Fix(\va_2)$.
\end{cor}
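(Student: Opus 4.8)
The plan is to exploit that $F$ is the germ of a diffeomorphism, hence a local diffeomorphism at the origin: whenever $F(\Fix(\psi)) = \Fix(\chi)$ for two of the involutions appearing in (\ref{eq:sequences}), the restriction of $F$ is a diffeomorphism between the two fixed-point submanifolds, so that $\dim \Fix(\psi) = \dim \Fix(\chi)$. Recall that each such fixed-point set is indeed a smooth submanifold, as noted just before Lemma~\ref{lemma:FixPhin}, so that its dimension is well defined and preserved by a diffeomorphism.

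First I would read the dimension equalities directly off Lemma~\ref{lemma:FixPhin}. From $F\left(\Fix(\va_{k+2})\right) = \Fix(\va_k)$ one gets $\dim \Fix(\va_{k+2}) = \dim \Fix(\va_k)$ for every $k \geq 1$, and likewise $\dim \Fix(\va_{k+2}') = \dim \Fix(\va_k')$ from the second family of equalities. An induction on $k$ in steps of two then shows that $\dim \Fix(\va_k)$ depends only on the parity of $k$: it equals $\dim \Fix(\va_1)$ for odd $k$ and $\dim \Fix(\va_2)$ for even $k$, and the same holds for the primed involutions with reference values $\dim \Fix(\va_1') = \dim \Fix(\va_1)$ (since $\va_1' = \va_1$) and $\dim \Fix(\va_2')$.

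It remains to identify the primed reference values with the unprimed ones. For this I would invoke the two bridging identities already established inside the proof of Theorem~\ref{thm:chains}, namely $F(\Fix(\va_1)) = \Fix(\va_3')$ and $F(\Fix(\va_2)) = \Fix(\va_2')$. The first yields $\dim \Fix(\va_3') = \dim \Fix(\va_1)$, the odd primed value, and the second yields $\dim \Fix(\va_2') = \dim \Fix(\va_2)$, the even primed value. Combining everything, every odd-index fixed-point submanifold, primed or unprimed, has dimension $\dim \Fix(\va_1)$, and every even-index one has dimension $\dim \Fix(\va_2)$, which is exactly the claim. Equivalently, one may phrase the argument entirely through the two chains of Theorem~\ref{thm:chains}: each chain consists of fixed-point submanifolds successively carried onto one another by the diffeomorphism $F$, so all members of a single chain share a common dimension; the chain through $\Fix(\va_2)$ and $\Fix(\va_2')$ collects precisely the even indices, and the chain through $\Fix(\va_1)$ and $\Fix(\va_3')$ collects the odd ones.

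Because $F$ is a genuine diffeomorphism on a neighborhood of the origin, there is essentially no obstacle here. The only points deserving a word of care are that the sets involved are bona fide submanifolds, so that invariance of dimension under $F$ is legitimate, and that the induction together with the connectivity displayed in the chains (\ref{eq:chain}) actually reaches every index of both sequences; both are handled by the remarks above.
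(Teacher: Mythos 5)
Your proposal is correct and follows exactly the route the paper intends: the corollary is stated as a direct consequence of Lemma~\ref{lemma:FixPhin}, and your argument simply spells out that $F$, being a local diffeomorphism, preserves the dimension of the fixed-point submanifolds along the chains, with the bridging identities $F(\Fix(\va_1)) = \Fix(\va_3')$ and $F(\Fix(\va_2)) = \Fix(\va_2')$ linking the primed and unprimed families. Nothing is missing.
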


It should be stressed that dynamically and geometrically relevant results should not depend on the choice of coordinates. Proposition~\ref{prop:equivalence} below establishes this point. Before stating it we define the equivalence of two sets of involutions which is given by simultaneous conjugacy:

\begin{defn}
Two pairs $(\va_1, \va_2)$ and $(\psi_1, \psi_2)$ of involutions on $(\R^n,0)$ are {\em equiva\-lent} if there exists a germ of diffeomorphism $h: (\R^n,0) \to (\R^n,0)$ such that $\psi_i = h \circ \va_i \circ h^{-1},$ for $i = 1,2.$  If  
$h$ is a germ of a linear isomorphism,  we say that $(\va_1, \va_2)$ and $(\psi_1, \psi_2)$ are {\em linearly equivalent}.
\end{defn}
 
For two equivalent pairs of involutions $(\varphi_1, \varphi_2)$ and $(\psi_1, \psi_2)$, it follows that  for each $k \in {\N}$, $k \geq 2,$ the two pairs $(\va_k, \va_k')$ and $(\psi_k, \psi_k'),$ constructed in  
(\ref{eq:sequences}) for $F = \varphi_1 \circ \varphi_2$ and $G = \psi_1 \circ \psi_2$ respectively, are  equivalent. In addition, $F$ and
$G$ are also conjugate, so they generate equivalent dynamics. These equivalences are clearly governed by the same $h$, which directly implies the following:

\begin{prop} \label{prop:equivalence}
The equalities in Lemma~\ref{lemma:FixPhin} and the chains in Theorem~\ref{thm:chains} are invariant under equivalence.
\end{prop}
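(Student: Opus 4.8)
The plan is to exploit the two facts already assembled in the paragraph preceding the statement: that the conjugating germ $h$ intertwines the generators, $\psi_i = h\circ\va_i\circ h^{-1}$, so that $G = h\circ F\circ h^{-1}$, and that the \emph{same} $h$ realizes the equivalences $\psi_k = h\circ\va_k\circ h^{-1}$ and $\psi_k' = h\circ\va_k'\circ h^{-1}$ of the constructed sequences. What remains is to translate these conjugacies into statements about fixed-point sets and then substitute them into the assertions of Lemma~\ref{lemma:FixPhin} and Theorem~\ref{thm:chains}.

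The single auxiliary ingredient I would isolate is the behaviour of a fixed-point set under conjugacy: for any germ $\alpha$ and any germ of diffeomorphism $h$,
\[
\Fix(h\circ\alpha\circ h^{-1}) = h\bigl(\Fix(\alpha)\bigr).
\]
This follows by a one-line membership chase, since $(h\circ\alpha\circ h^{-1})(x)=x$ is equivalent to $\alpha(h^{-1}(x))=h^{-1}(x)$, that is, to $h^{-1}(x)\in\Fix(\alpha)$. Applying this to the conjugacies recorded above yields $\Fix(\psi_k)=h(\Fix(\va_k))$ and $\Fix(\psi_k')=h(\Fix(\va_k'))$ for every admissible $k$.

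With these identities in hand the invariance becomes a direct computation. For the first equality of Lemma~\ref{lemma:FixPhin}, writing $G=h\circ F\circ h^{-1}$ and using $h^{-1}\circ h = I_n$ gives
\[
G\bigl(\Fix(\psi_{k+2})\bigr)
= h\Bigl(F\bigl(h^{-1}(h(\Fix(\va_{k+2})))\bigr)\Bigr)
= h\bigl(F(\Fix(\va_{k+2}))\bigr)
= h\bigl(\Fix(\va_k)\bigr)
= \Fix(\psi_k),
\]
where the third equality is precisely Lemma~\ref{lemma:FixPhin} for $F$; the second equality of the lemma is handled identically. For the chains of Theorem~\ref{thm:chains} the same substitution shows that $h$ sends every arrow $F\colon\Fix(\va_j)\to\Fix(\va_i)$ to the corresponding arrow $G\colon\Fix(\psi_j)\to\Fix(\psi_i)$, so that $h$ is a commuting intertwiner of the two chains, carrying vertices to vertices and arrows to arrows. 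Hence each chain is the image of the other under $h$, which is the asserted invariance.

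I do not expect a genuine obstacle here: the content of the proposition is bookkeeping, and the only point demanding care is to state the conclusion correctly, namely that invariance means the conjugating germ $h$ realizes an isomorphism of the chain diagrams (vertices and arrows simultaneously), rather than a mere coincidence of individual fixed-point sets. Once the conjugacy formula for fixed-point sets is recorded, every displayed equality transfers verbatim.
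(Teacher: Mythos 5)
Your proposal is correct and follows essentially the same route as the paper, which simply observes that the same conjugating germ $h$ intertwines $F$ with $G$ and each $\va_k$, $\va_k'$ with $\psi_k$, $\psi_k'$, and leaves the resulting bookkeeping implicit. You have merely made explicit the key identity $\Fix(h\circ\alpha\circ h^{-1}) = h(\Fix(\alpha))$ and the substitution, which is exactly the computation the paper intends.
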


The next result gives a sufficient condition for an orbit starting at a fixed-point submanifold to be periodic:

\begin{prop}
For any reversing symmetry $\varphi$ of  $F$,  if $x \in \Fix(\va)$ and if there exists $\ell \in {\N}$ such that $F^\ell(x) \in \Fix(\va)$, then the orbit of $x$ is periodic. In addition, for the sequences of reversing symmetries of $F$ in 
(\ref{eq:sequences}) and $\ell > k,$ we have:
\begin{enumerate}
\renewcommand{\theenumi}{(\Alph{enumi})}
\renewcommand{\labelenumi}{{\theenumi}}
\item\label{caso1}
 If $x \in \Fix(\va_k) \cap \Fix(\va_\ell),$ then the orbit of $x$ is a periodic orbit with period that divides $\ell - k.$ Also, if $x$ is a periodic point with period that divides $\ell - k$ and $x \in \Fix(\va_k),$ then $x \in \Fix(\va_\ell).$
\item\label{caso2}
 If  $x \in \Fix(\va_k') \cap \Fix(\va_\ell'),$ then the orbit of $x$ is a periodic orbit with period that divides $\ell- k$. Also, if $x$ is a periodic point with period that divides $\ell - k$ and $x \in \Fix(\va_k'),$ then $x \in \Fix(\va_\ell').$
\item\label{caso3}
  If $x \in \Fix(\va_k') \cap \Fix(\va_\ell)$, then the orbit of $x$ is a periodic orbit with period that divides $k+ \ell -2$. Also, if $x$ is a periodic point with period that divides $k + \ell - 2$ and $x \in \Fix(\va_k'),$ then $x \in \Fix(\va_\ell).$
\end{enumerate} 
\end{prop}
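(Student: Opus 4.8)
The plan is to reduce every assertion to a single algebraic identity satisfied by reversing symmetries, and then to read off periodicity from coincidences of iterates of $F$. The unifying observation is that in each case we are given \emph{two} fixed-point conditions on the same point $x$; eliminating the involutions between them forces an equation of the form $F^N(x) = x$, which is exactly periodicity with period dividing $N$.

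First I would dispatch the general statement. Since $F\circ\varphi = \varphi\circ F^{-1}$, conjugation gives $\varphi^{-1}\circ F\circ\varphi = F^{-1}$, hence $\varphi^{-1}\circ F^m\circ\varphi = F^{-m}$ for every $m$, i.e. $\varphi\circ F^m = F^{-m}\circ\varphi$. (Note this uses only that $\varphi$ is an invertible reversing symmetry, not that it is an involution.) If $x\in\Fix(\va)$ and $F^\ell(x)\in\Fix(\va)$, then $F^\ell(x) = \va(F^\ell(x)) = F^{-\ell}(\va(x)) = F^{-\ell}(x)$, so $F^{2\ell}(x)=x$ and the orbit is periodic.

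For the three refined cases I would use the explicit forms $\va_k=\va_2\circ F^{k-2}$ and $\va_k'=F^{k-1}\circ\va_1$ of (\ref{eq:sequences}), together with $F=\va_1\circ\va_2$, the consequence $\va_2=\va_1\circ F$, and the reversing-involution identities $\va_i\circ F^m = F^{-m}\circ\va_i$ for $i=1,2$. Case \ref{caso1}: the hypotheses $\va_k(x)=x$ and $\va_\ell(x)=x$ read $\va_2 F^{k-2}(x)=\va_2 F^{\ell-2}(x)=x$, and cancelling the bijection $\va_2$ gives $F^{\ell-k}(x)=x$. The converse is the same computation reversed: if $F^{\ell-k}(x)=x$ and $x\in\Fix(\va_k)$, then $\va_\ell(x)=\va_2 F^{\ell-2}(x)=\va_2 F^{k-2}(x)=x$. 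Case \ref{caso2} is entirely parallel, using that $\va_k'(x)=x$ is equivalent to $\va_1(x)=F^{1-k}(x)$, whence both hypotheses give $F^{1-k}(x)=F^{1-\ell}(x)$ and $F^{\ell-k}(x)=x$; the converse follows from $\va_\ell'(x)=F^{\ell-1}F^{1-k}(x)=F^{\ell-k}(x)=x$.

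The genuinely different case is \ref{caso3}, where the two hypotheses live in the two different sequences and no cancellation is immediate; this is where I expect the (purely computational) obstacle to lie. Here I would combine $x\in\Fix(\va_k')$, i.e. $\va_1(x)=F^{1-k}(x)$, with $x\in\Fix(\va_\ell)$, i.e. $\va_2(x)=F^{\ell-2}(x)$, and eliminate the involutions by rewriting $\va_2=\va_1\circ F$ and pushing $\va_1$ past $F$ via $\va_1\circ F=F^{-1}\circ\va_1$: this turns $\va_2(x)$ into $F^{-1}\va_1(x)=F^{-k}(x)$, so $F^{-k}(x)=F^{\ell-2}(x)$ and therefore $F^{k+\ell-2}(x)=x$. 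For the converse, substituting $\va_1(x)=F^{1-k}(x)$ into $\va_\ell(x)=\va_1 F^{\ell-1}(x)=F^{1-\ell}\va_1(x)=F^{-(k+\ell-2)}(x)$ and using $F^{k+\ell-2}(x)=x$ yields $\va_\ell(x)=x$. The only delicate point throughout is the bookkeeping of exponents and the correct use of $\va_2=\va_1\circ F$ to bridge the two sequences; once these are handled, every statement reduces to an elementary identity.
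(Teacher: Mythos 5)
Your proof is correct and follows essentially the same route as the paper: the general claim via $\va\circ F^{\ell}=F^{-\ell}\circ\va$, and cases \ref{caso1}--\ref{caso3} via the algebra of \eqref{eq:sequences}, which the paper packages as the composite identities $\va_k\circ\va_\ell=F^{\ell-k}$, $\va_k'\circ\va_\ell'=F^{k-\ell}$, $\va_k'\circ\va_\ell=F^{k+\ell-2}$ together with $\Fix(\va_k)\cap\Fix(\va_\ell)\subseteq\Fix(\va_k\circ\va_\ell)$, while you equivalently cancel the involutions directly in the fixed-point equations. Your write-up is in fact slightly more complete, since you verify the converse ``Also, \dots'' statements explicitly, which the paper leaves implicit.
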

\begin{proof}
The first part  is straightforward, just noticing that $x, F^{\ell}(x)  \in \Fix(\va)$ implies that 
$$ F^{-\ell}(x) =  F^{-\ell}( \va(x))  = \va (F^{\ell}(x)) = F^\ell(x). $$
For the statements \ref{caso1}--\ref{caso3} we use \eqref{eq:sequences} to get
$$
\va_k \circ \va_\ell = F^{\ell - k},\quad 
\va_k' \circ \va_\ell' = F^{k-\ell },\quad
\va_k' \circ \va_\ell = F^{\ell + k-2}.
$$
The periodicity then follows from 
$\Fix(\va_k) \cap \Fix(\va_\ell) \subseteq \Fix(\va_k \circ \va_\ell)=\Fix(F^{\ell - k})$ for \ref{caso1} and similarly for \ref{caso2} and for \ref{caso3}.

\end{proof}

A particular case of the proposition above can be found in \cite{Devaney}, namely when the whole space has even dimension and the fixed-point subspaces are  $n/2$-dimensional submanifolds of $\R^n$.

From  Corollary~\ref{cor:dimension}, if $\dim \Fix(\va_1) = \dim \Fix(\va_2) = n-1$,  then the  fixed-point submanifolds split $({\Bbb R}^n,0)$ into connected regions. The result below describes how the dynamics by $F = \va_1 \circ \va_2$  behaves with respect to these regions. 

\begin{thm}\label{thm:FixPhin2}
Let $\va_1, \va_2: (\R^n,0) \to (\R^n,0)$ be two involutions with $\dim \Fix(\va_1) = \dim \Fix(\va_2) = n-1$. Let $\va_k$, $\va_k'\in\Gamma_-$, $k\in\N$, $k\ge 1$ be as in (\ref{eq:sequences}).  
Then  $F = \va_1 \circ \va_2$ interchanges the connected components of the germ at the origin of
$$
\mathcal{C} = \R^n\setminus \bigcup_{k=1}^\infty\left( \Fix(\va_k)\cup  \Fix(\va_k')\right), $$
determined by these fixed-point submanifolds. 

\end{thm}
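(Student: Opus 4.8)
Write \(\mathcal{F}=\bigcup_{k\ge1}\big(\Fix(\va_k)\cup\Fix(\va_k')\big)\) for the union of all the fixed-point submanifolds, so that \(\mathcal{C}\) is the germ at the origin of \(\R^n\setminus\mathcal{F}\). The plan is to reduce the whole statement to a single set-theoretic identity, namely
\[
F(\mathcal{F})=\mathcal{F}.
\]
The hypothesis \(\dim\Fix(\va_1)=\dim\Fix(\va_2)=n-1\), via Corollary~\ref{cor:dimension}, guarantees that every \(\Fix(\va_k)\) and every \(\Fix(\va_k')\) is a codimension-one submanifold; this is what makes the notion of ``connected components determined by these submanifolds'' meaningful, and it is precisely the identity above that forces \(F\) to respect this arrangement of walls and hence permute its chambers.

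First I would establish the identity by bookkeeping the image of each wall. Lemma~\ref{lemma:FixPhin} gives \(F(\Fix(\va_{k+2}))=\Fix(\va_k)\) and \(F(\Fix(\va_k'))=\Fix(\va_{k+2}')\) for \(k\ge1\), while the computation in the proof of Theorem~\ref{thm:chains} supplies the two remaining values \(F(\Fix(\va_1))=\Fix(\va_3')\) and \(F(\Fix(\va_2))=\Fix(\va_2')\). Recalling \(\va_1'=\va_1\), the distinct walls are indexed by \(\{\Fix(\va_k):k\ge1\}\cup\{\Fix(\va_k'):k\ge2\}\), and the four rules send this index set bijectively onto itself: the walls \(\Fix(\va_k)\), \(k\ge3\), are sent onto all of the unprimed walls; the walls \(\Fix(\va_1),\Fix(\va_2)\) go to \(\Fix(\va_3'),\Fix(\va_2')\); and the walls \(\Fix(\va_k')\), \(k\ge2\), are sent onto \(\Fix(\va_j')\), \(j\ge4\). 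Together these images exhaust every wall exactly once, so \(F\) merely permutes the walls and \(F(\mathcal{F})=\mathcal{F}\).

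Since \(F\) is a germ of a diffeomorphism, it is in particular a homeomorphism of a neighborhood of the origin, and as a bijection it satisfies \(F(\mathcal{C})=\R^n\setminus F(\mathcal{F})=\R^n\setminus\mathcal{F}=\mathcal{C}\). A homeomorphism that preserves \(\mathcal{C}\) carries connected components to connected components, so \(F\) induces a permutation of the connected components of \(\mathcal{C}\), which is the asserted interchange. I would close by noting that the interchange is genuine and not trivial: for instance \(F(\Fix(\va_2))=\Fix(\va_2')\) is a different wall from \(\Fix(\va_2)\), so a chamber having \(\Fix(\va_2)\) in its boundary is carried to one having \(\Fix(\va_2')\) in its boundary.

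I expect the principal difficulty to be organizational rather than conceptual: keeping the two interleaved sequences and the identification \(\va_1'=\va_1\) straight, so that the induced map on indices is verified to be a bijection with no wall lost or double-counted. A secondary point demanding care is that every set here is read as a germ at \(0\), so that \(\mathcal{C}\) and its components must be understood on a sufficiently small neighborhood on which \(F\) is a diffeomorphism; one should also keep in mind that the dimension hypothesis is used only to make the walls separating, as the identity \(F(\mathcal{F})=\mathcal{F}\) itself holds independently of it.
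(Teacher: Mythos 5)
Your proof is correct, but it takes a genuinely different and more global route than the paper's. The paper argues chamber by chamber: it takes a component $V$, identifies the walls $\Fix(\psi_p)$, $\Fix(\psi_q)$ bounding it, applies Lemma~\ref{lemma:FixPhin} (together with the two extra identities from Theorem~\ref{thm:chains}) to locate the walls bounding $F(V)$, and concludes that $F(V)$ is another component; it must then treat separately the components whose boundary meets the set of accumulation points of the walls (as happens in Subsections~\ref{subs:trace2}, \ref{subs:trace>2} and \ref{subsec:last}), by showing that $F$ maps accumulation points of walls to accumulation points of walls. Your argument --- check that the four mapping rules induce a bijection of the index set of walls, deduce $F(\mathcal{F})=\mathcal{F}$, hence that $F$ restricts to a self-homeomorphism of $\mathcal{C}$ and therefore permutes its connected components --- subsumes both steps at once, handles the accumulation phenomenon automatically, and avoids the paper's somewhat delicate claim that the boundary of $F(V)$ is ``determined by'' the images of the boundary walls of $V$. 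What you give up is the explicit description of \emph{which} component goes where: the paper's local bookkeeping is precisely what licenses statements in the examples such as ``the sector between $\Fix(\va_2)$ and $\Fix(\va_4)$ is mapped onto the sector between $\Fix(\va_2')$ and $\Fix(\va_2)$,'' whereas your argument only yields the existence of the permutation. One small caution on your closing remark: ``interchanges'' here can only mean ``permutes,'' since in Subsection~\ref{subs:trace2} the two half-lines of $\Fix(F)$ are components of $\mathcal{C}$ fixed pointwise by $F$; so the observation that $F(\Fix(\va_2))=\Fix(\va_2')\neq\Fix(\va_2)$ shows the permutation is nontrivial on \emph{some} components but should not be read as asserting $F(V)\neq V$ for every $V$.
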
 

\begin{proof}
 Take $V$ a region whose boundary is determined by the fixed-point manifolds of involutions $\psi_p, \psi_q \in  \Delta = \{\va_k, \va_k': k \geq1 \}.$ By path connectedness of $F(V)$ and  Lemma~\ref{lemma:FixPhin},  the boundary of $F(V)$ is determined by  $\Fix(\widehat{\psi}_p) \cup \Fix(\widehat{\psi}_q)$, with $\widehat{\psi}_p, \widehat{\psi}_q \in \Delta $ 
 where each pair $(\psi_p, \widehat{\psi}_p)$   and $(\psi_q, \widehat{\psi}_q)$ consists of consecutive elements in one  of the chains in (\ref{eq:chain}), that is, 
 \[ F(\Fix(\psi_p)) = \Fix(\widehat{\psi}_p), \ F(\Fix(\psi_q)) = \Fix(\widehat{\psi}_q). \] 
 Therefore, $F(V)$ is another component. 
 
 It remains to consider the case when part of the boundary of a connected component $V$ of the complement
 is not contained in a fixed-point manifold of any involution in $\Delta$ (see the examples of Subsections~\ref{subs:trace2}, \ref{subs:trace>2} and \ref{subsec:last}). This happens when the boundary of $V$ meets the set ${\mathcal{C}}$ of  accumulation points of these fixed-point manifolds.
  We claim that $F({\mathcal{C}})\subset{\mathcal{C}}$.
  Indeed, a point $x\in{\mathcal{C}}$
% We claim that a point $x\in{\mathcal{C}}$ that is an accumulation point of these fixed-point manifolds is mapped by $F$ into another accumulation point.
% Such a point 
 may be written as $\dpt x=\lim_{n\to\infty} x_n$ with $x_n\in \Fix(\psi_n)$ for some $\psi_n\in\Delta$.
 Since $F$ is a homeomorphism, then $\dpt F(x)=\lim_{n\to\infty} F( x_n)$. 
 Lemma~\ref{lemma:FixPhin} implies that $F( x_n)\in \Fix(\widehat{\psi}_n)$ for some $\widehat{\psi}_n\in\Delta$,
 establishing the claim. It follows that if the boundary of a component $V \subset {\mathcal{C}}$ consists of accumulation points of fixed-point subspaces, then $V$ is mapped by $F$ into a component with the same type of boundary. The same is true if the boundary of $V$ contains both elements of fixed-point submanifolds and elements of ${\mathcal{C}}$, completing the proof.
\end{proof}

We now define transversality of two involutions. This is a generic condition we assume for the pairs of involutions treated in the next sections.

\begin{defn} \label{def:transversal}
Two involutions $\va_1, \va_2$ on $(\R^n, 0),$ $n \geq 2,$ are {\em transversal} if $\Fix (\va_1)$ and $\Fix (\va_2)$ are in general position at 0, i.e., $$\R^n = T_0 \Fix (\va_1) + T_0 \Fix (\va_2),$$ where $T_0 \Fix(\va_i)$ denotes the tangent space to $\Fix(\va_i)$ at 0, $i = 1,2$.
\end{defn}

In the next two sections we apply the previous results to analyse the behavior of a $\va_1$-reversible germ of diffeomorphism $F$ associated with a pair $(\va_1, \va_2)$ of transversal linear involutions on $(\R ^n, 0),$ for $n=2$ and $n \geq 3,$ respectively. For this, we restrict our study to the linear case, considering the group of symmetries $\Gamma_{+}$ and the set of reversing symmetries $\Gamma_{-},$ defined in (\ref{group}), as subsets of the linear group ${\rm GL}(n)$ and keeping the notation introduced in this section.  

%In this special case, we obtain the following corollary:
%\begin{cor} 
%Let $(\va_1, \va_2)$ be a pair of involutions on $(\R^n,0)$ that is equivalent to a pair $(\psi_1,\psi_2)%$ of transversal linear involutions, and let $\va_k$, $\va_k',$ $k\in\N$, $k\ge 2$ be as in (\ref{eq:sequences}).  
%Then  $F = \va_1 \circ \va_2$ interchanges the connected components of the germ at the origin of
%$$
%\mathcal{C} = \R^n\setminus \bigcup_{k=1}^\infty\left( \Fix(\va_k)\cup  \Fix(\va_k')\right). $$
%\end{cor}
%\begin{proof}
%In the special case when $\psi_1,\psi_2$ are transversal linear involutions, it follows from   %\cite[Theorem 6.2]{MMT} that $\Fix(\psi_i)$ are vector subspaces with $\dim \Fix(\psi_1) = \dim \Fix(\psi_2) = n-1$, as will be seen in the next sections.
%Since the pair of involutions  $(\va_1, \va_2)$ is equivalent to such a pair $(\psi_1,\psi_2)$, then $\dim \Fix(\va_1) = \dim \Fix(\va_2) = n-1$ and the result follows from Theorem~\ref{thm:FixPhin2}.
%\end{proof}

\section{Dynamics and geometry of linear reversible maps on the plane} \label{subsecN2}

In this section we consider a germ of diffeomorphism $F = \psi_1 \circ \psi_2$ on $(\R^2, 0),$ where $\psi_1$ and $\psi_2$ are transversal linear involutions (Definition~\ref{def:transversal}). We present the results up to equivalence of pairs of involutions given by simultaneous conjugacy. Hence, the pair $(\psi_1,\psi_2)$ is considered to be in normal form, which is given in \cite[Theorem 6.2]{MMT}. For that, we first recall the definition of the antipodal subspace of a linear involution $\va$ on $\R^n,$
$$\mathcal{A}(\va) = \{x \in \R^n: \  \va(x) = -x\}.$$ Notice that $\R^n = \Fix(\va) \oplus \mathcal{A}(\va).$ Denote by $\Lambda = [\psi_1, \psi_2]$ the group generated by $\psi_1$ and $\psi_2.$ There are three cases to be considered: 
\begin{enumerate}
\renewcommand{\theenumi}{(\roman{enumi})}
\renewcommand{\labelenumi}{{\theenumi}}
\item\label{d2i}
$\Lambda$ is Abelian;
\item\label{d2ii}
 $\Lambda$ is non-Abelian and $\mathcal{A}(\psi_2) = \Fix (\psi_1);$ 
\item\label{d2iii} $\Lambda$ is non-Abelian and $\mathcal{A}(\psi_2) \neq \Fix (\psi_1).$ 
  \end{enumerate} 
 Our aim is to investigate, for the cases above, the fixed-point subspaces of the reversing involutions in $\Lambda_{-} = \Lambda \cap \Gamma_{-}$ and their relation to the dynamics generated by $F.$ Let us denote by $\Lambda_{+} = \Lambda \cap \Gamma_{+}$ the group generated by $F$ and by ${\bf Z}_2(\va)$ the 2-element group generated by $\va.$ 

\subsection{$\Lambda$ is Abelian} \label{subsec:Abelian}

This is a trivial case. By \cite[Theorem 6.2]{MMT}, $(\psi_1,\psi_2)$ is equivalent to $(\va_1,\va_2),$ where 
$$\va_1(x,y) = (-x,y) \quad \mbox{and} \quad \va_2 (x,y) = (x,-y),$$ so $\Lambda = {\bf Z}_2(\va_1) \oplus {\bf Z}_2(\va_2).$ Since $F = -I_2,$ the dynamics is rather degenerate because all the $F$-orbits (except the origin) are periodic of  period 2. Moreover, $$\va_{2k+1} = \va_{2k+1}' = \va_1 \quad \mbox{and} \quad \va_{2k} = \va_{2k}' = \va_2$$ for all $k \geq 1.$ Therefore, $\Fix(F) = \{(0,0)\}$ and the fixed-point subspaces $\Fix(\va_1)= \langle (0,1) \rangle$ and $\Fix(\va_2)= \langle (1,0) \rangle$ divide the plane into four connected components that are interchanged by $F$ (Theorem~\ref{thm:FixPhin2}). 

Moreover, $\Gamma_{+} = \Gamma_{-} = {\rm GL}(2),$ while $\Lambda_{+} = {\bf Z}_2(-I_2)$ and $\Lambda_{-} = \{\va_1,\va_2\}.$

\subsection{$\Lambda$ is non-Abelian and $\mathcal{A}(\psi_2) = \Fix (\psi_1)$} \label{subsec:Non1}

By \cite[Theorem 6.2]{MMT}, $(\psi_1,\psi_2)$ is equivalent to $(\va_1,\va_2),$ where $$\va_1(x,y) = (-x, x+y) \quad \mbox{and} \quad \va_2(x,y) = (x,-y).$$ In this case $F(x,y) = (-x,x-y),$ whose eigenvalues are $\lambda_1 = \lambda_2 = -1$ with geometric multiplicity 1. Writing $F = -I_2 + N,$ where  $$N = \begin{pmatrix}
0 & 0 \\ 1 & 0 \end{pmatrix},$$ we have $F^k = (-1)^k (I_2 - kN) \neq I_2$ for all $k \in \mathbb{N},$ 
%done
which implies that $$\va_k(x,y) = (-1)^k(x,(k-2)x - y) \quad \text{and} \quad \va_k'(x,y) = (-1)^k(x,-kx - y).$$ 
 Yet, $$\Fix(F) = \{(0,0)\} =  \Fix(F^{2k+1}), \quad \Fix(\va_{2k}) = \langle(1,k-1)\rangle, \quad \Fix(\va_{2k}') = \langle(1,-k)\rangle,$$ $$\Fix(F^{2k}) = \Fix(\va_{2k+1}) = \Fix(\va_{2k+1}') = \langle(0,1)\rangle,$$ for all $k \geq 1.$ Therefore, the fixed-point subspaces of $\va_{2k}$ and $\va_{2k}'$ approach the $y$-axis as $k$ tends to infinity (see Figure \ref{figFix}). 

For the dynamics, we use the expression of $F^k$ to conclude that the $y$-axis is $F$-invariant and that the orbits of all its points, except the origin, have period 2. For the other points, by linearity, it suffices to look at the orbits of points $(1,y)$ given by
 $F^k(1,y)=(-1)^k\left(1,y-k\right) $, as illustrated in Figure~\ref{figFix}, on the right. This case provides an interesting illustration of Theorem~\ref{thm:FixPhin2}. For instance, the sector $\{(x,y): \ x>0,\ 0<y<x\}$ between $\Fix(\va_2)$ and $\Fix(\va_4)$ is mapped onto the sector
 $\{(x,y): \ x<0,\ 0<y<-x\}$ between $\Fix(\va_2')$ and $\Fix(\va_2)$.
Also,  the sector $\{(x,y): \ x<0,\ -x<y<-2x\}$ between $\Fix(\va_2')$ and $\Fix(\va_4')$ is mapped onto the sector
 $\{(x,y): \ x>0,\ -3x<y<-2x\}$ between $\Fix(\va_6')$ and $\Fix(\va_4')$. See Figure~\ref{figFix} on the left. 

Symmetries and reversing symmetries are as follows:
$$\Lambda_{+} = \left\lbrace (-1)^k (I_2 - kN): k \in \Z\right\rbrace , \quad \Lambda_{-} = \left\lbrace \va_k,\ \va_k': k \in \N, \ k \geq 1\right\rbrace ,$$ 
$$\Gamma_{+} = \biggl\{ \begin{pmatrix}
a & 0 \\ c & a
\end{pmatrix}: \ a, c \in \R, \ a \neq 0 \biggr\} \quad \text{and} \quad \Gamma_{-} = \biggl\{ \begin{pmatrix}
a & 0 \\ c & -a
\end{pmatrix}: \ a, c \in \R, \ a \neq 0 \biggr\},$$ both $\Gamma_{\pm}$ manifolds of dimension 2.

\begin{figure}
\begin{center}
\includegraphics[height=6cm]{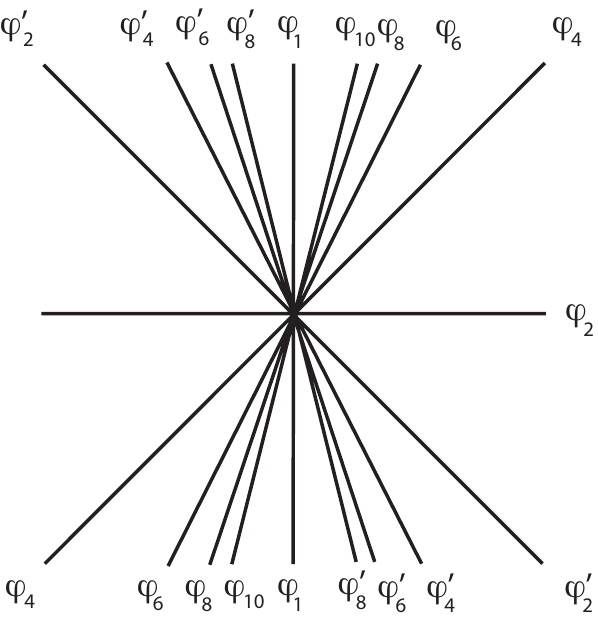}
\qquad
\includegraphics[height=6cm]{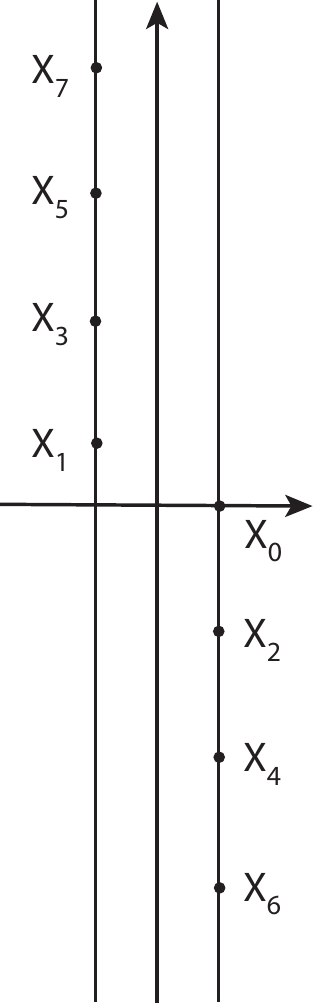}
\end{center}
\caption{Left: fixed-point subspaces for the involutions $\va_1$, $\va_{2k}$ and $\va_{2k}'$ in case~\ref{d2ii}.
Note that $\Fix(\va_{2k+1}')=\Fix(\va_{2k+1})=\Fix(\va_1)$, $k=1,2,3,\ldots$.
Right: orbit of the point $X_0=(1,0)$, $X_k=F^k(X_0)$.
}
\label{figFix}
\end{figure}

\subsection{$\Lambda$ is non-Abelian and $\mathcal{A}(\psi_2) \neq \Fix (\psi_1)$} \label{subsec:Non2}
Let us denote $t = \tr(\psi_1 \circ \psi_2).$ From \cite[Theorem 6.2]{MMT}, $(\psi_1,\psi_2)$ is equivalent to $(\va_1,\va_2),$ where 
\begin{equation}
\label{eq:case(iii)} \va_1(x,y) = (-x, y + (2 + t)x) \quad \text{and} \quad \va_2(x,y) = (x+y,-y).
\end{equation}
The analysis here considers all possibilities for the parameter $t,$ which is an invariant under linear simultaneous conjugacy. We have 
$$
F = \begin{pmatrix}
-1 & -1 \\ 2 + t & 1 + t
\end{pmatrix},$$ whose eigenvalues  
$$
\lambda_+ = \dfrac{t+ \sqrt{t^2 - 4}}{2}
 \qquad \text{and} \qquad 
 \lambda_- = \dfrac{t - \sqrt{t^2 - 4}}{2}
$$  satisfy $\lambda_{+}\lambda_{-} = 1.$ Notice that $\lambda_+ = \lambda_- = \pm 1$ if, and only if, $t = \pm 2,$ respectively. A direct calculation gives the group of symmetries of $F$
$$
\Gamma_{+} = \biggl\{ \begin{pmatrix}
a+(2+t) b & \ b \\ -(2+t)b & \ a
\end{pmatrix} \in {\rm GL}(2): \ a, b \in \R   \biggr\} 
$$
and the set of reversing symmetries of $F$ 
$$
 \Gamma_{-} = 
 \begin{pmatrix}
1 & 1 \\ 0& -1
\end{pmatrix} 
\Gamma_{+}
 =
 \biggl\{ \begin{pmatrix}
a & \ a+ b \\ (2+t) b & \ -a
\end{pmatrix} \in {\rm GL}(2): \ a, b \in \R\biggr\},
$$ both manifolds of dimension 2.

% In this case, if $t \neq -3$ then $\Gamma_{+} \cap {\bf O}(2) = \Z_2(-I),$ and if $t = -3$ then $\Gamma_{+} \cap {\bf O}(2) = \Z_2(-I) \oplus \Z_2(\xi),$ where $$\xi = \begin{pmatrix}
%-\frac{\sqrt{5}}{5} & \frac{2\sqrt{5}}{5} \\ \frac{2\sqrt{5}}{5} & \frac{\sqrt{5}}{5}
%\end{pmatrix}.$$

Since the normal form of $F$ depends only on the parameter $t,$ we have subdivided this subsection in four cases. In all of them the group of symmetries and the set of reversing symmetries gene\-rated by $\va_1$ and $\va_2$ are given respectively by $\Lambda_{+} = \left\lbrace F^k: k \in \Z\right\rbrace$ and $\Lambda_{-} = \left\lbrace \va_k,\ \va_k': k \in \N, \ k \geq 1\right\rbrace$.

\subsubsection{Normal form (\ref{eq:case(iii)}) with $t=- 2$} \label{subs:trace-2} In this case, $F(x,y) = (-x - y,-y),$ whose eigenvalues are $\lambda_1 = \lambda_2 = -1$ with geometric multiplicity 1. Writing $F = -I_2 + N,$ where $N$ is a nilpotent matrix of index 2, we have $F^k(x,y) = (-1)^k(x + ky, y)$ for all $k \in \mathbb{Z},$ which implies that $$\va_k(x,y) = (-1)^k(x + (k-1)y,-y) \quad \text{and} \quad \va_k'(x,y) = (-1)^k(x - (k-1)y,- y).$$ Therefore $$\Fix(F) = \{(0,0)\} = \Fix(F^{2k+1}), \quad \Fix(\va_{2k}) = \Fix(\va_{2k}') = \Fix(F^{2k}) = \langle (1,0) \rangle,$$ $$\Fix(\va_{2k+1}) = \langle(-k,1)\rangle \quad \mbox{and} \quad \Fix(\va_{2k+1}') = \langle(k,1)\rangle$$ for all $k \geq 1.$ The fixed-point subspaces of $\va_{2k+1}$ and $\va_{2k+1}'$ approach the $x$-axis as $k$ tends to infinity, which is an $F$-invariant line (see Figure \ref{figFixC2}).

%Moreover, $\Gamma_{+} \cap {\bf O}(2) = {\bf Z}_2(-I).$

\subsubsection{Normal form (\ref{eq:case(iii)}) with t = 2}  \label{subs:trace2} In this case, $F(x,y) = (-x - y,4x + 3y),$ whose eigenvalues are $\lambda_1 = \lambda_2 = 1$ with geometric multiplicity 1. Writing $F = I_2 + N$ for a nilpotent matrix $N$ of index 2, we have $$F^k (x,y) = \big((1-2k)x - ky, 4kx + (1+2k)y\big), \ \forall \ k \in \mathbb{Z},$$ which implies that $\va_k(x,y) = ((2k-3)x + (k-1)y,-4(k-2)x - (2k-3)y)$ and $\va_k'(x,y) = ((1-2k)x - (k-1)y,4kx - (1-2k)y).$ Therefore $$\Fix(F) = \Fix(F^{k}) = \langle (1,-2)\rangle$$ is the eigenspace of $F$ associated with $\lambda = 1$ for all $k \in \Z$ non-zero. 
%done
Moreover, $\Fix(\va_{k}) = \langle(k-1,4-2k)\rangle$ and $\Fix(\va_{k}') = \langle (k-1,-2k) \rangle$ for all $k \geq 1.$ The fixed-point subspaces of $\va_{k}$ and $\va_{k}'$ approach the $F$-invariant line $y = -2x$ as $k$ tends to infinity (see Figure \ref{figFixC2}). Here the two half-lines $$\Fix(F) \cap \{(x,y) \in \R^2: \ x > 0\} \quad \text{and} \quad \Fix(F) \cap \{(x,y) \in \R^2: \ x < 0\}$$ are components of $\dpt \R^2\setminus \bigcup_{k=1}^\infty\left( \Fix(\va_k)\cup  \Fix(\va_k')\right).$

%Also, 
%$$
%\Gamma_{+} = \biggl\{ \begin{pmatrix}
%a + 4b & b \\ -4b & a
%\end{pmatrix}: \ a, b \in \R\biggr\} \quad \text{and} \quad \Gamma_{-} = \biggl\{ \begin{pmatrix}
%a & a + b \\ 4b & -a
%\end{pmatrix}: \ a, b \in \R \biggr\},
%$$ 
%which are distinct manifolds of dimension 2 such that $\Gamma_{+} \cap {\bf O}(2) = {\bf Z}_2(-I).$

\begin{figure}
\begin{center}
\includegraphics[height=5.4cm]{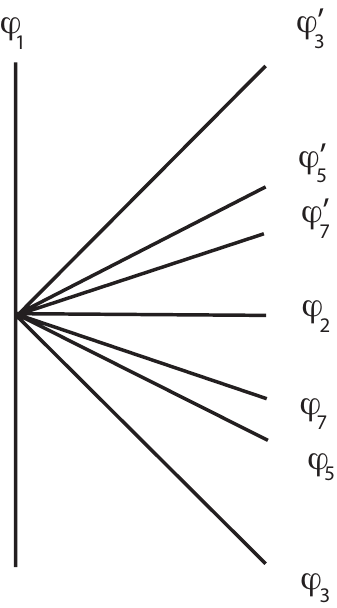}
\qquad\qquad
\includegraphics[height=4cm]{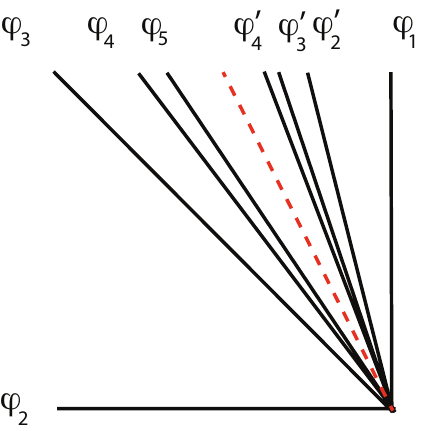}
\end{center}
\caption{Fixed-point subspaces for the involutions $\va_1$, $\va_{2k}$ and $\va_{2k}'$ in case~\ref{d2iii}
with $t=\pm 2$. 
Left: $t=  -2$. In this case
 $\Fix(\va_{2k}')=\Fix(\va_{2k})=\Fix(\va_2)$, $k = 1,2,3,\ldots$.
Right: $t=  2$.  The dashed red line is $\Fix(F)=\{(x,y): y=-2x\}$ where all the lines $\Fix(\va_k)$ and $\Fix(\va_k')$ accumulate as $k\to\infty$.
}
\label{figFixC2}
\end{figure}

\subsubsection{Normal form (\ref{eq:case(iii)}) with $\vert t \vert < 2$}  \label{subs:trace<2} When $-2<t<2$, the map $F$ has complex eigenvalues $$\lambda_\pm= \dfrac{t\pm i \sqrt{4-t^2}}{2},$$ with $|\lambda_\pm|=1.$ Hence $F$ is diagonalizable  over $\C$
 with 
$\lambda_\pm=\ee^{\pm i\theta},$ where $\theta=\arccos(t/2).$ This means that there is a change of coordinates that conjugates $F$  to a rotation of $\theta$ radians around the origin. The complex eigenvectors associated to $\lambda_{\pm}$ have the form $R+iI$ where
$$
R=\left(1,-\dfrac{t}{2} - 1\right)\qquad
I=\left(0,-\dfrac{\sqrt{4-t^2}}{2} \right).
$$
From now on we use coordinates in the basis $\beta=\{R,I\}$ of $\R^2$  for which, taking $\alpha_k=(k-1)\theta$, with $k \geq 1$, we have
$$
\left. \va_k\right|_\beta=\begin{pmatrix}
-\cos\alpha_k& \sin\alpha_k\\ \sin\alpha_k& \cos\alpha_k
\end{pmatrix}
\quad \mbox{and} \quad
\left. \va_k'\right|_\beta=\begin{pmatrix}
-\cos\alpha_k & -\sin\alpha_k \\ -\sin\alpha_k &\cos\alpha_k
\end{pmatrix},
$$
whence  $$\Fix(\va_k) = \langle\left.\left(\sin\alpha_k,1+\cos\alpha_k\right)\right|_\beta \rangle \quad \mbox{and} \quad \Fix(\va_k') = \langle \left.\left(-\sin\alpha_k,1+\cos\alpha_k\right)\right|_\beta \rangle.$$
Hence the coordinates in the basis $\beta$ of the generators of $\Fix(\va_k)$ and $\Fix(\va_k')$
lie on a circle of center $(0,1)$ and radius 1 (see Figure~\ref{FigFixIrrational}). 
When $\theta/2\pi= p/q$, $p, q \in \mathbb{Z}$, $q\ne 0$,  then $F^q=I_2$  and all the $F$-orbits are periodic of  period $q.$ Also, $\Lambda_{-}$ is finite and their fixed-point subspaces form a finite set of lines through the origin. When $\theta /2\pi \notin \Q,$ $\Lambda_{-}$ is infinite and a set of generators of $\Fix(\va_k)$ and of $\Fix(\va_k')$ can be taken to form each a dense set in the circle, and hence the union of  fixed-point subspaces is dense in the plane. In the original coordinates, there is a family of concentric $F$-invariant ellipses and each $F$-orbit is dense on the ellipse that contains it.

\begin{figure}
\begin{center}
\includegraphics[height=5cm]{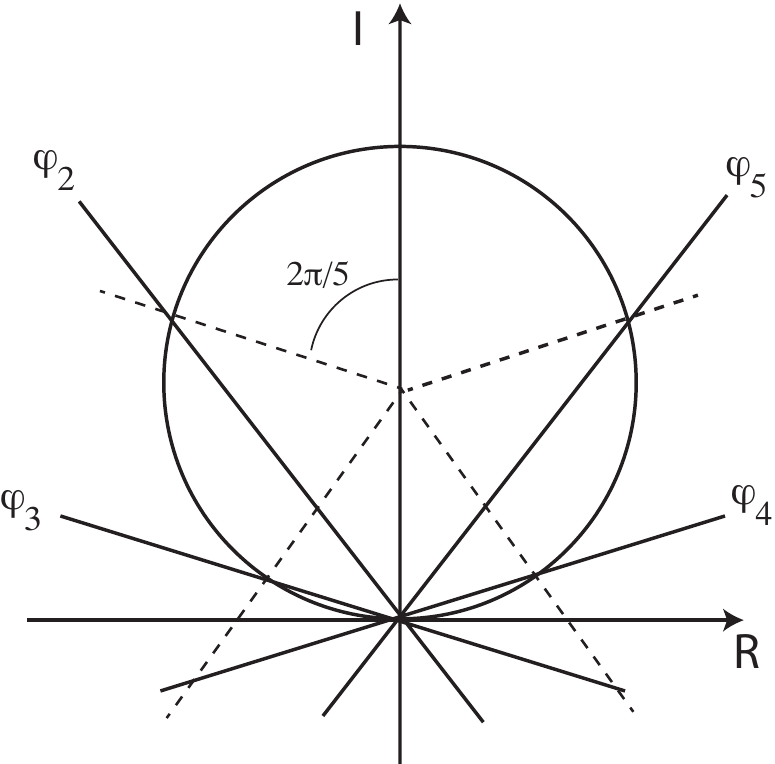}
\end{center}
\caption{Fixed-point subspaces in the $R\times I$ coordinates for the involutions $\va_k$ in case~\ref{d2iii}, when $|t|<2$ for $\theta=2\pi/5$. 
In this case $\Fix(\va_1)$ is the $I$-axis,  $\Fix(\va_{k})=\Fix(\va_{l})=\Fix(\va_{r}')$ when $k\equiv l\pmod{5}$ and $r+k\equiv 2\pmod{5}$.
}
\label{FigFixIrrational}
\end{figure}

%
%%fim da parte nova 5/XII/18
%

\bigbreak

\subsubsection{Normal form (\ref{eq:case(iii)}) with $\vert t \vert > 2$}  \label{subs:trace>2} When $t>2$ the map $F$ has real eigenvalues $\lambda_+>1$ and $0<\lambda_-<1$, whereas for $t<-2$ the eigenvalues of $F$ satisfy $\lambda_- <-1<\lambda_+<0$. 
Hence, $F$ is hyperbolic and $F^k\ne I_2$ for $k\ne 0$. The eigenvectors of $F$ associated to $\lambda_{\pm}$ are generated by $(1,-1-\lambda_\pm),$ respectively. From now on we use coordinates in the basis $\beta=\{(1,-1-\lambda_+),(1,-1-\lambda_{-})\}$ of $\R^2$  for which we have $F$ diagonal,  
$$
\left. \va_k\right|_\beta=\begin{pmatrix}
0 & -\lambda_{-}^{k-1}\\ -\lambda_{+}^{k-1} & 0
\end{pmatrix}
\quad \mbox{and} \quad
\left. \va_k'\right|_\beta=\begin{pmatrix}
0 & -\lambda_{+}^{k-1}\\ -\lambda_{-}^{k-1} & 0
\end{pmatrix}.
$$ Therefore, $$\Fix(\va_k) = \langle (1,-\lambda_{+}^{k-1})|_\beta \rangle \quad \mbox{and} \quad \Fix(\va_k') =  \langle(1,-\lambda_{-}^{k-1})|_\beta\rangle.$$
These fixed-point subspaces do not coincide with the eigenspaces of $F$.
Hence, powers of $F$ and $F^{-1}$ map  $\Fix(\va_k)$, $\Fix(\va_k')$ into distinct subspaces, according to Theorem \ref{thm:chains}.
Moreover, it follows that the
  $\Fix(\va_k')$'s accumulate, when $k\to\infty$, on the expanding eigenspace of $F$: 
 if $t>2$  the $\Fix(\va_k')$'s accumulate  on the  eigenspace of $\lambda_+>1$ and for 
  $t<-2$  they accumulate on the eigenspace of $\lambda_-<-1$ (see Figure~\ref{figFixCT}).
Similarly, the subspaces  $\Fix(\va_k)$'s accumulate, when $k\to\infty$, on the contracting eigenspace of $F$ (the expanding eigenspace of $F^{-1}$).

\begin{figure}[h] 
\begin{center}
\includegraphics[height=6.5cm]{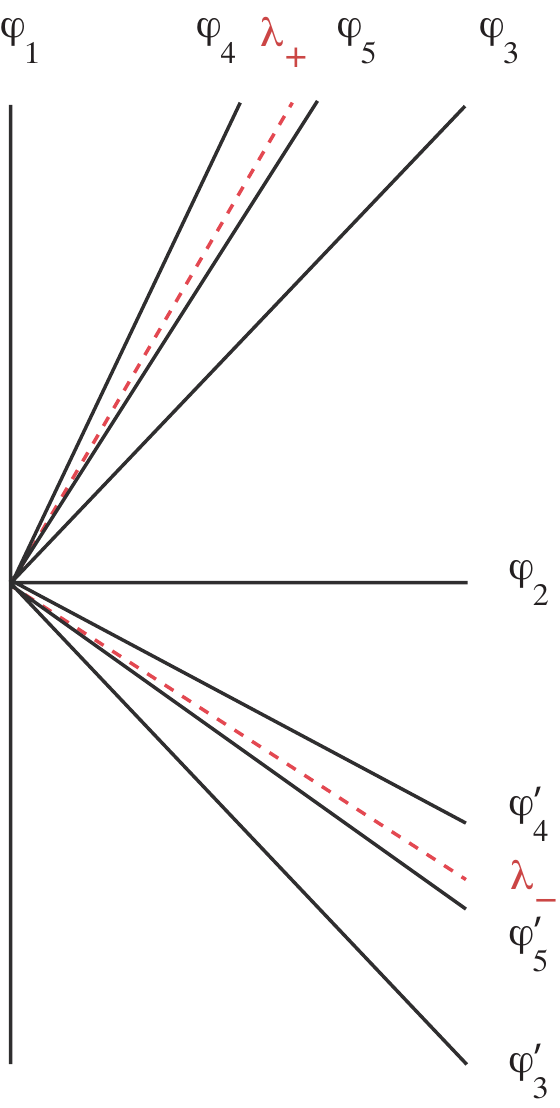}
\qquad\qquad
\includegraphics[height=5cm]{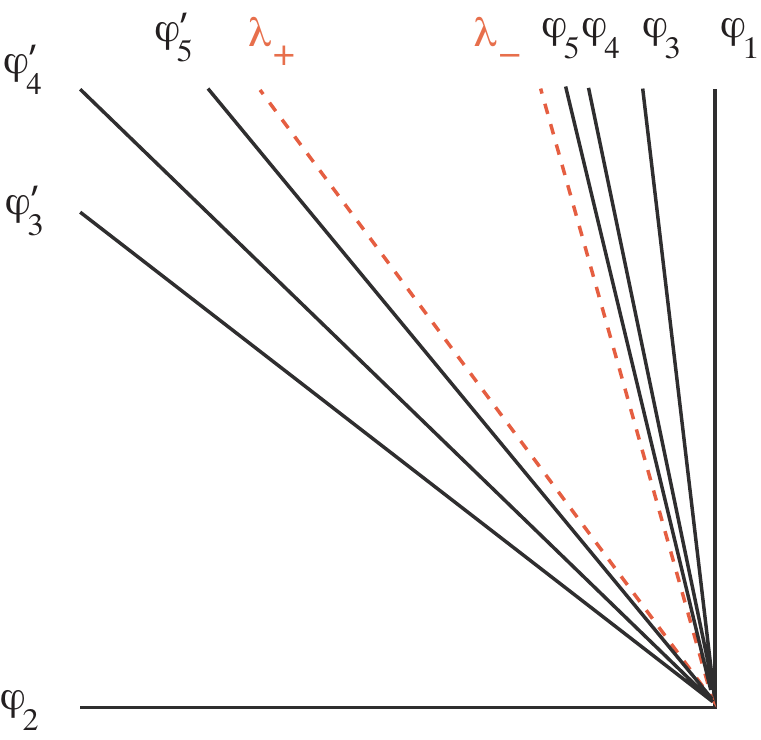}
\end{center}
\caption{Eigenspaces for $F$ (red dotted lines) and fixed-point subspaces for the involutions $\va_1$, $\va_{2k}$ and $\va_{2k}'$ in case~\ref{d2iii} for $|t|>2$.
Left: $t= -3$. 
Right: $t= 3$.  
}
\label{figFixCT}
\end{figure}

\section{Dynamics and geometry of linear reversible maps for $n \geq 3$} \label{subsecN3}

In this section we obtain a generalization of the results of Section~\ref{subsecN2} for $n \geq 3.$ As we shall see, the planar case leads to a similar analysis of the dynamics of a germ of diffeomorphism $F = \psi_1 \circ \psi_2$ on $(\R^n, 0),$ for $n \geq 3,$ where $\psi_1$ and $\psi_2$ are transversal linear involutions. Again, we denote by $\Lambda = [\psi_1, \psi_2]$ the group generated by the involutions and use the normal forms for the pairs of transversal linear involutions on $(\R^n, 0)$ now given in \cite[Theorem 7.3]{MMT}. There are five cases to consider:
\begin{enumerate}
\renewcommand{\theenumi}{(\alph{enumi})}
\renewcommand{\labelenumi}{{\theenumi}}
 \item\label{d3a}
 $\Lambda$ is Abelian;
  \item\label{d3b}
  $\Lambda$ is non-Abelian, $\tr(F) \neq n$ and $\mathcal{A}(\psi_2) \subset \Fix (\psi_1);$
   \item\label{d3c}
   $\Lambda$ is non-Abelian, $\tr(F) \neq n$ and $\mathcal{A}(\psi_2) \not\subset \Fix (\psi_1);$
  \item\label{d3d}
 $\Lambda$ is non-Abelian, $\tr(F) = n$ and $\mathcal{A}(\psi_1) = \mathcal{A}(\psi_2);$  
  \item\label{d3e}
  $\Lambda$ is non-Abelian, $\tr(F) = n$ and $\mathcal{A}(\psi_1) \neq \mathcal{A}(\psi_2).$ 
 \end{enumerate} 
 We investigate the relation between the fixed-point subspaces of the involutions in $\Lambda_{-}$ and the dynamics generated by $F.$ Let us denote by $e_i$ the vector with 1 in the $i$-th coordinate and $0$ elsewhere,  for $i=1, \ldots, n$.

 The following definition will be useful:
 
 \begin{defn} 
The map-germ   $\widehat{f}: (\R^{m+\ell},0) \to (\R^{m+\ell},0)$ is a {\em suspension}  of $f: (\R^{m},0) \to (\R^m,0)$ if 
$\widehat{f}(x,y)=(f(x),y),$ where $x\in\R^m$ and $y\in\R^\ell$.
The pair $(\widehat{\va}_1,\widehat{\va}_2)$ is a {\em suspension} of $(\va_1,\va_2)$ if each $\widehat{\va}_i$ is a suspension of $\va_i$ in the same system of coordinates.
\end{defn}

A consequence of the results of \cite{MMT} is that a pair of linear transversal involutions $(\psi_1,\psi_2)$ is equivalent to a suspension of a pair of planar involutions, except in case \ref{d3e}, with $n \geq 4$, for which the pair is equivalent to a suspension of a pair of involutions in $\R^3$.
The following trivial proposition summarises the properties of suspensions.

\begin{prop}\label{P42}
For linear involutions $\widehat{\va}_i: (\R^{m+\ell},0) \to (\R^{m+\ell},0)$, $i=1,2,$ if
$(\widehat{\va}_1,\widehat{\va}_2)$ is a suspension of the involutions $(\va_1,\va_2)$, with $\va_i:(\R^{m},0) \to (\R^m,0),$ then:
\begin{itemize}
\item $\widehat{F}=\widehat{\va}_1\circ\widehat{\va}_2$ is a suspension of $F=\va_1\circ\va_2$;
\item $\widehat{\va}_k$ and $\widehat{\va}'_k$, $k \geq 1$ integer, are suspensions of $\va_k$ and $\va_k'$ respectively;
\item $\Fix(\widehat{\va}_k)=\Fix(\va_k)\times\R^\ell$ and 
$\Fix(\widehat{\va}'_k)=\Fix(\va'_k)\times\R^\ell$;
\item The group $\widehat{\Gamma}_+$ of symmetries of $\widehat{F}$ consists of invertible  matrices of the form
$$\left( \begin{array}{c|c}
A & C \\ \hline D & B  
\end{array} \right),$$
where $A \in \Gamma_+$ is a symmetry of $F$, $B\in {\rm {GL}} (\ell)$ 
 with $FC=C$ and $DF=D$. A similar result holds for the set $\widehat{\Gamma}_-$  of reversing symmetries of $\widehat{F}$.
%and the matrices $C$ and $D$ are invariant by applying $F$ to the left and to the right, respectively.  The set $\widehat{\Gamma}_-$ is also formed by {\color{red} invertible} block matrices as above, with $A \in \Gamma_-$ and same conditions on $B,C$ and $D$.}
\end{itemize}
\end{prop}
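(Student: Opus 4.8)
The plan is to turn every assertion into a block-matrix computation built on the block-diagonal form of $\widehat{F}$. Writing points of $\R^{m+\ell}$ as pairs $(x,y)$ with $x\in\R^m$ and $y\in\R^\ell$, the hypothesis $\widehat{\va}_i(x,y)=(\va_i(x),y)$ means that, as a linear map, $\widehat{\va}_i=\begin{pmatrix}\va_i & 0\\ 0 & I_\ell\end{pmatrix}$. The first bullet is then the one-line check $\widehat{\va}_1\circ\widehat{\va}_2(x,y)=(\va_1\va_2(x),y)=(F(x),y)$, so $\widehat{F}=\begin{pmatrix}F & 0\\ 0 & I_\ell\end{pmatrix}$. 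Since the suspension operation $f\mapsto\widehat f$ acts trivially on $y$, it is a homomorphism for composition and commutes with inversion and with taking powers; applying it to the defining formulas $\va_k=\va_2\circ F^{k-2}$ and $\va_k'=F^{k-1}\circ\va_1$ of (\ref{eq:sequences}) gives the second bullet, namely $\widehat{\va}_k=\widehat{\va}_2\circ\widehat{F}^{k-2}$ is the suspension of $\va_k$ and similarly for $\widehat{\va}_k'$. The third bullet is immediate from $\widehat{\va}_k(x,y)=(\va_k(x),y)$: the pair $(x,y)$ is fixed precisely when $\va_k(x)=x$, so $\Fix(\widehat{\va}_k)=\Fix(\va_k)\times\R^\ell$, and likewise for $\widehat{\va}_k'$.

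For the fourth bullet I would write a candidate symmetry in blocks as $M=\begin{pmatrix}A & C\\ D & B\end{pmatrix}$ with $A$ of size $m\times m$ and $B$ of size $\ell\times\ell$. Using the block-diagonal form of $\widehat F$, the equivariance condition $\widehat{F}M=M\widehat{F}$ splits into $FA=AF$, $FC=C$ and $DF=D$, the lower-right block imposing nothing on $B$. Thus $A$ commutes with $F$, the columns of $C$ lie in $\Fix(F)$, and $D$ annihilates $\IM(F-I_m)$. The reversing case is identical after replacing $\widehat F$ by $\widehat{F}^{-1}=\begin{pmatrix}F^{-1} & 0\\ 0 & I_\ell\end{pmatrix}$ on the right: the relation $\widehat{F}M=M\widehat{F}^{-1}$ yields $FA=AF^{-1}$, that is $A\in\Gamma_-$, together with the same constraints $FC=C$ and $DF=D$.

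The only step that is not a one-line computation, and which I expect to be the main obstacle, is showing that invertibility of $M$ forces $A$ and $B$ to be invertible, so that genuinely $A\in\Gamma_+$ (resp. $\Gamma_-$) and $B\in{\rm GL}(\ell)$ rather than merely satisfying the commuting relations. I would use the $\widehat F$-invariant splitting $\R^m=U\oplus V$, where $U$ is the generalized eigenspace of $F$ for the eigenvalue $1$ and $V$ is the sum of the remaining generalized eigenspaces; both are $A$-invariant because $A$ commutes with $F$. On $V$ the map $F-I_m$ is invertible, so $FC=C$ forces $\IM C\subseteq\Fix(F)\subseteq U$ and $DF=D$ forces $V\subseteq\IM(F-I_m)\subseteq\ker D$; hence $M$ leaves $V$ invariant and restricts there to $A|_V$, which is therefore invertible. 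The delicate part is the eigenvalue-$1$ block on $U\times\R^\ell$, where one must combine the invertibility of $M$ on this $\widehat F$-invariant summand with the finer constraints $\IM C\subseteq\Fix(F)$ and $\IM(F-I_m)\subseteq\ker D$ and the Jordan structure of $F$ on $U$ to deduce that $A|_U$ and $B$ are invertible; in the applications of Sections~\ref{subsecN2} and \ref{subsecN3} this can alternatively be confirmed directly case by case.
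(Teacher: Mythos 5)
The paper offers no proof of this proposition at all (it is introduced as ``trivial''), and your block-matrix computation is exactly the intended argument. The first three bullets and the derivation of the conditions $FA=AF$ (resp.\ $FA=AF^{-1}$), $FC=C$, $DF=D$ from $\widehat{F}M=M\widehat{F}^{\pm 1}$ are complete and correct.

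The difficulty you flag in the last bullet is genuine, and you are right not to wave it away: invertibility of $M$ does \emph{not} force $A$ and $B$ to be invertible for arbitrary $F$, so the fourth bullet as literally stated can fail. Take $m=2$, $\ell=1$, $F=\mathrm{diag}(1,-1)$, so $\widehat{F}=\mathrm{diag}(1,-1,1)$; the coordinate swap $M(x_1,x_2,x_3)=(x_3,x_2,x_1)$ commutes with $\widehat{F}$ and satisfies $FC=C$ and $DF=D$, yet $A=\mathrm{diag}(0,1)$ and $B=(0)$ are singular. The obstruction is precisely a Jordan block of size $1$ for the eigenvalue $1$ of $F$. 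The clean hypothesis that rescues the statement, and which holds in every case where the paper actually invokes it, is $\ker(F-I_m)\subseteq \IM(F-I_m)$: it is vacuous when $\Fix(F)=\{0\}$ (cases \ref{d3a}--\ref{d3c}) and holds for the $t=2$ and case~\ref{d3e} normal forms, where $F=I+N$ with no size-one Jordan block. Under that hypothesis your eigenspace idea closes the argument in a few lines: $\ker A$ is $F$-invariant, so if nonzero it contains a minimal $F$-invariant subspace $P$ of dimension $1$ or $2$; either $F|_P$ has eigenvalue $1$, whence $P\subseteq\ker(F-I_m)\subseteq\IM(F-I_m)$, or $(F-I_m)|_P$ is invertible, whence $P=(F-I_m)P\subseteq\IM(F-I_m)$; in both cases $P\subseteq\ker D$, so $M$ annihilates $P\times\{0\}$, contradicting invertibility. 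Hence $A$ is invertible; since $A^{-1}$ then preserves $\ker(F-I_m)$, one gets $DA^{-1}C=0$ and the Schur complement of $A$ in $M$ is just $B$, which must therefore be invertible. The same works for $\widehat{\Gamma}_-$ because $FA=AF^{-1}$ still makes $\ker A$ and $\ker(F-I_m)$ invariant under $A$. You should either add this hypothesis to the statement or, as you propose, verify the applications case by case; without some such restriction the fourth bullet is not provable because it is not true.
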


Using \cite[Theorem 7.3]{MMT}, the relation of cases \ref{d3a}--\ref{d3d} above to \ref{d2i}--\ref{d2iii} of the previous section is the following:

\begin{itemize}
%\item[(a)]
\item[\ref{d3a}]
if $\Lambda$ is Abelian, then $(\psi_1,\psi_2)$ is equivalent to $(\widehat{\va}_1,\widehat{\va}_2),$ where $(\widehat{\va}_1,\widehat{\va}_2)$ is a suspension of the normal forms in Subsection~\ref{subsec:Abelian};
 %\item[(b)] 
 \item[\ref{d3b}]
  if $\Lambda$ is non-Abelian, $\tr(F) \neq n$ and $\mathcal{A}(\psi_2) \subset \Fix (\psi_1)$, then $(\psi_1,\psi_2)$ is equiva\-lent to $(\widehat{\va}_1,\widehat{\va}_2),$ where $(\widehat{\va}_1,\widehat{\va}_2)$ is a suspension of the normal forms in Subsection~\ref{subsec:Non1};
  \item[\ref{d3c}]
   if $\Lambda$ is non-Abelian, $\tr(F) \neq n$ and $\mathcal{A}(\psi_2) \not\subset \Fix (\psi_1)$, then $(\psi_1,\psi_2)$ is equiva\-lent to $(\widehat{\va}_1,\widehat{\va}_2),$ where $(\widehat{\va}_1,\widehat{\va}_2)$ is a suspension of the normal forms in Subsection~\ref{subsec:Non2} with $t\ne 2$;
 \item[\ref{d3d}]
 if $\Lambda$ is non-Abelian, $\tr(F) = n$ and $\mathcal{A}(\psi_1) = \mathcal{A}(\psi_2)$, then $(\psi_1,\psi_2)$ is equivalent to $(\widehat{\va}_1,\widehat{\va}_2),$ where $(\widehat{\va}_1,\widehat{\va}_2)$ is a suspension of the normal forms in Subsection~\ref{subsec:Non2} with $t=2$.
\end{itemize}

All these cases satisfy the hypothesis of Theorem~\ref{thm:FixPhin2}, {\it i.e.}, $\Fix(\psi_1)$ and $\Fix(\va_2)$ are hyperplanes.

In the next subsection we discuss  the remaining case \ref{d3e}, which does not suspend from the planar problem.

\subsection{Case~\ref{d3e}: $\Lambda$ is non-Abelian, $\tr(F) = n$ and $\mathcal{A}(\psi_1) \neq \mathcal{A}(\psi_2)$} \label{subsec:last}

By \cite[Theorem 7.3]{MMT}, $(\psi_1,\psi_2)$ is equivalent to $(\va_1,\va_2),$  where
\begin{itemize} \item[] $\va_1(x_1,\ldots, x_n) = (-x_1, 4x_1 + x_2, x_3, \ldots, x_n),$ \item[] $\va_2(x_1,\ldots, x_n) = (x_1 + x_2, -x_2, x_2 + x_3, x_4, \ldots, x_n).$ \end{itemize} In this case $F = I_n + N,$ where 
$$N = \left( \begin{array}{ccc| c}
-2 & -1 & 0 &  \\ 4 & 2 & 0 &   \\ 0 & 1 & 0 & \\ \hline &  & &  0  
\end{array} \right)$$ is a nilpotent matrix of index 3. Therefore $F$ has eigenvalues $\lambda = 1$ with algebraic multiplicity $n$ and geometric multiplicity $n-2.$ Since 
$$F^k = I_n + kN + \dfrac{k(k-1)}{2}N^2 = \left( \begin{array}{lll| c}
1-2k & -k & 0 &  \\ 4k & 2k + 1 & 0 &    \\ 2k(k-1) & k^2 & 1  &   \\ \hline &  &  & I_{n-3} \end{array} \right) $$ 
for each $k \in \Z,$ we have 
$$\Fix(F) = \Fix(F^{k}) = \langle e_3, \ldots, e_n\rangle,$$ for all $k \in \Z$ non-zero. Moreover, 
$$\Fix(\va_{k}) = \langle(k-1)e_1 + (4-2k)e_2, e_3, \ldots, e_n\rangle$$ 
and 
$$\Fix(\va_{k}') = \langle(k-1)e_1 -2ke_2, e_3, \ldots, e_n\rangle$$ for all $k \geq 2.$ For $k =1,$ we have  $\Fix(\va_{1}) = \Fix(\va_{1}') = \langle e_2, e_3, \ldots, e_n\rangle.$ 
Thus, when  $k \to \infty$, the fixed-point subspaces of $\va_{k}$ and $\va_{k}'$ approach the $F$-invariant subspace $x_2 = -2x_1$.

The subspaces $\Fix(\va_k)$ and $\Fix(\va_k')$ have codimension 1, so we can still apply  Theorem~\ref{thm:FixPhin2}.
The situation is similar to the example of Subsection~\ref{subs:trace2}: the invariant limit hyperplane contains two connected 
components of 
$$\dpt \R^n \setminus \bigcup_{k = 1}^{\infty}\left( \Fix(\va_k) \cup \Fix(\va_k')\right).$$

%old
%
%The group formed by the linear symmetries of $F$ is written as $\Gamma_{+} \times { \rm { GL}}(n-3),$ where $\Gamma_{+}$ is the 3-dimensional manifold of elements
%$$ \begin{pmatrix}
%a & b & 0 \\ -4b & a - 4b & 0 \\ 2(b+c) & c & a - 2b
%\end{pmatrix}, \ \ \ a, b, c \in \R, \ a \neq 2b.$$ 
%The set of the linear reversing symmetries of $F$ is then  $\Gamma_{-} \times {\rm GL}(n-3),$ where $\Gamma_{-} = \va_1 \Gamma_+$ is the 3-dimensional manifold of elements 
%$$ \begin{pmatrix}
%-a & -b & 0 \\ 4(a-b) & a & 0 \\ 2(b+c) & c & a - 2b
%\end{pmatrix}, \ \  \ a, b, c \in \R, \ a \neq 2b.$$ 
%
%new

Linear symmetries of $F$ have the form given in Proposition~\ref{P42} and, where $A\in\Gamma_{+}$ and $ B\in { \rm { GL}}(n-3),$ with $FC=C$ and $DF=D$, where $\Gamma_{+}$ is the 3-dimensional manifold of elements
$$ \begin{pmatrix}
a & b & 0 \\ -4b & a - 4b & 0 \\ 2(b+c) & c & a - 2b
\end{pmatrix}, \ \ \ a, b, c \in \R, \ a \neq 2b.$$ 
 Linear reversing symmetries of $F$ have the same form, with $A\in \Gamma_{-} = \va_1 \Gamma_+$, the 3-dimensional manifold of elements 
$$ \begin{pmatrix}
-a & -b & 0 \\ 4(a-b) & a & 0 \\ 2(b+c) & c & a - 2b
\end{pmatrix}, \ \  \ a, b, c \in \R, \ a \neq 2b.$$

\bigbreak

\subsection*{Acknowledgments} 
%P.H.B. and M.M. acknowledge partial support by CAPES under CAPES/FCT grant 99999.008523/2014-02. I.S.L. had partial financial support through CMUP (UID/MAT/00144/2019), which is funded by FCT  (Portugal) with national (MCTES) and European structural funds through the programs FEDER, under the partnership agreement PT2020. 
%
%CAPES/FCT provided financial support for  visits of the authors to the Universities of Porto and of S\~ao Paulo, whose hospitality is gratefully acknowledged.

%P.H.B. and M.M. acknowledge partial support by CAPES under {\color{red} CAPES grant 99999.008523/2014-02. }I.S.L. had partial financial support through CMUP (UIDB/00144/2020), which is funded by Funda\c c\~ao para a Ci\^encia e a Tecnologia I.P. (Portugal).

CAPES/FCT  provided financial support  under the grant 88887.125430/2016-00 for  visits of the authors to the Universities of Porto and of S\~ao Paulo, whose hospitality is gratefully acknowledged.
I.S.L. had partial financial support through CMUP (UIDB/00144/2020), which is funded by Funda\c c\~ao para a Ci\^encia e a Tecnologia I.P. (Portugal).

\medbreak

\end{document}